\theoremstyle{definition}
\newtheorem{thm}{Theorem}[section]
\newtheorem{cor}[thm]{Corollary}
\newtheorem{lem}[thm]{Lemma}
\newtheorem{prop}[thm]{Proposition}
\newtheorem{defn}[thm]{Definition}
\newtheorem{ques}[thm]{Question}
\newtheorem{rem}[thm]{Remark}
\newcommand{\RR}{\mathbb R}
\newcommand{\PP}{\mathbb P}
\newcommand{\ZZ}{\mathbb Z}
\newcommand{\M}{\mathrm{M}}
\newcommand{\be}{\mathbf{e}}
\newcommand{\cS}{\mathcal S}
\newcommand{\cQ}{\mathcal Q}
\newcommand{\kk}{\mathbbm k}
\renewcommand{\emptyset}{\varnothing}
\newcommand{\bwedge}{\textstyle{\bigwedge}\displaystyle}
\title{Cohomologies of tautological bundles of matroids}
\author{Christopher Eur}
\begin{document}

\maketitle

\vspace{-10pt}

\begin{abstract}
Tautological bundles of realizations of matroids were introduced in \cite{BEST23} as a unifying geometric model for studying matroids. We compute the cohomologies of exterior and symmetric powers of these vector bundles, and show that they depend only on the matroid of the realization.  As an application, we show that the log canonical bundle of a wonderful compactification of a hyperplane arrangement complement, in particular the moduli space of pointed rational curves $\overline{{M}}_{0,n}$, has vanishing higher cohomologies.
\end{abstract}


\section{Introduction}

Let $E = \{1, \ldots, n\}$ be a finite set, and $\kk$ an algebraically closed field.
Let $L\subseteq \kk^E$ be an $r$-dimensional linear subspace.
The \emph{matroid} $\M$ of $L$ is the data of the set
\[
\{B \subseteq E : \text{the composition $L\hookrightarrow \kk^E \twoheadrightarrow \kk^B$ is an isomorphism}\}
\]
called the set of \emph{bases} of $\M$.  We say that $L$ \emph{realizes} the matroid $\M$.
We will explain notions from matroid theory as necessary, and refer to \cite{Oxl11} for a general background.

\smallskip
In \cite{BEST23}, Berget, Spink, Tseng, and the author introduced \emph{tautological bundles} of realizations of matroids as a new geometric model for studying matroids.
Let us recall the construction.  Let $T$ be the algebraic torus $(\kk^*)^E$, which acts standardly on $\kk^E$ via $(t_1, \dots, t_n) \cdot (x_1, \dots, x_n) = (t_1x_1, \dots, t_nx_n)$.
Let $\PP T = T/\kk^*$ be its projectivization, i.e.\ its quotient by the diagonal.  The image of $t\in T$ in $\PP T$ is denoted $\overline t$.
The \emph{permutohedral variety} $X_E$ (Definition~\ref{defn:perm}) is a smooth projective toric variety with the open dense torus $\PP T$,
considered here as a $T$-variety.
Let $\mathcal O_{X_E}^{\oplus E}$ be the trivial vector bundle $X_E \times \kk^E$ whose $T$-equivariant structure is given by the \emph{inverse} action of $T$ on $\kk^E$, i.e.\ $t \cdot_{\text{inv}} x = t^{-1} x$.

\begin{defn}
The \emph{tautological subbundle} and \emph{quotient bundle} of $L\subseteq \kk^E$ are the $T$-equivariant vector bundles $\cS_L$ and $\cQ_L$ (respectively) on $X_E$ defined by
\begin{align*}
\cS_L =& \text{ the $T$-equivariant subbundle of $\mathcal O_{X_E}^{\oplus E}$ whose fiber over $\overline t\in \PP T \subset X_E$ is $t^{-1}L$, and}\\
\cQ_L =& \text{ the $T$-equivariant quotient bundle of $\mathcal O_{X_E}^{\oplus E}$ whose fiber over $\overline t\in \PP T \subset X_E$ is $\kk^E/t^{-1}L$.}
\end{align*}
\end{defn}

For well-definedness, see \cite[Section 3.1]{BEST23}.  The authors of \cite{BEST23} showed that the $K$-classes $[\cS_L]$ and $[\cQ_L]$ of these vector bundles depend only on the matroid $\M$.  Moreover, by studying the Chern classes and sheaf Euler characteristics of the tautological bundles, both of which depend only on the $K$-class, they were able to unify, recover, and extend various recent developments in algebro-geometric studies of matroids.

Here, we ask:  How do the sheaf cohomologies of $\cS_L$ and $\cQ_L$ depend on the matroid $\M$?

\smallskip
Our main results are as follows.  We say that an element $e\in E$ is a \emph{coloop} (resp.\ \emph{loop}) of $L$ if the decomposition $\kk^{E\setminus e} \oplus \kk^{\{e\}}$ of $\kk^E$ decomposes $L$ into $L' \oplus \kk$ (resp.\ $L' \oplus \{0\}$) for some $L' \subseteq \kk^{E\setminus n}$, or equivalently, if every basis of $\M$ includes (resp.\ excludes) $e$.

\begin{thm}\label{thm:extcohom}
Exterior powers of $\cS_L$ and $\cQ_L$ have vanishing higher cohomologies, i.e.
\[
H^i(\bwedge^p \cS_L) = 0 \quad\text{and}\quad H^i(\bwedge^p \cQ_L)   =0 \quad \text{for all $i > 0$ and $p\geq 0$},
\]
and we have
\[
\sum_{p \geq 0} \dim H^0(\bwedge^p \cS_L)u^p = (u+1)^{|\text{coloops}(\M)|}   \quad\text{and}\quad
\sum_{p\geq 0} \dim H^0(\bwedge^p \cQ_L) u^p = \sum_{\substack{S\subseteq E \\ \text{$S$ contains }\\ \text{a basis of $\M$}}}u^{|E|-|S|}
\]
where $u$ is a formal variable.
\end{thm}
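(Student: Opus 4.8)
The plan is to split the statement into (i) an elementary computation of $H^0$, which yields the two generating functions, and (ii) the vanishing of all higher cohomology, which is the real content. Granting (ii), the dimensions $\dim H^0(\bwedge^p\cS_L)$ and $\dim H^0(\bwedge^p\cQ_L)$ equal the sheaf Euler characteristics, which by the $K$-theoretic results of \cite{BEST23} already depend only on $\M$; but I would establish (i) directly, so that no input from Euler characteristics is needed.

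For $H^0$ of the exterior powers of $\cS_L$: since $X_E$ is proper with $H^0(\mathcal O_{X_E})=\kk$, the global sections of $\bwedge^p\mathcal O_{X_E}^{\oplus E}$ are the constant $p$-vectors $\bwedge^p\kk^E$, and such an $\omega=\sum_{|I|=p}a_I e_I$ is a section of $\bwedge^p\cS_L$ exactly when $t\cdot\omega\in\bwedge^p L$ for every $t\in T$. Because distinct size-$p$ subsets of $E$ give distinct, hence linearly independent, characters of $T$, the $T$-orbit of $\omega$ spans $\operatorname{span}\{e_I:a_I\neq 0\}$, so the condition is that $e_I\in\bwedge^p L$ for each $I$ in the support of $\omega$. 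Now $\bwedge^p L$ is cut out inside $\bwedge^p\kk^E$ by the contractions $\iota_\varphi(-)=0$ with $\varphi$ ranging over $L^{\perp}$, and $\iota_\varphi(e_I)=\sum_{k}\pm\varphi(e_{i_k})\,e_{I\setminus i_k}$ vanishes for all such $\varphi$ precisely when every $\varphi\in L^{\perp}$ kills every $e_i$ with $i\in I$, i.e.\ when $e_i\in L$ for all $i\in I$; equivalently, every element of $I$ is a coloop. Hence $H^0(\bwedge^p\cS_L)=\bwedge^p\kk^{\text{coloops}(\M)}$, of dimension $\binom{|\text{coloops}(\M)|}{p}$, and summing over $p$ gives $(u+1)^{|\text{coloops}(\M)|}$.

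For $\cQ_L$ I would either run the analogous argument through the presentation $\mathcal O_{X_E}^{\oplus E}\twoheadrightarrow\cQ_L$, or use $\cQ_L^{\vee}\cong\cS_{L^{\perp}}$, obtained by dualizing $0\to\cS_L\to\mathcal O_{X_E}^{\oplus E}\to\cQ_L\to 0$, where $L^{\perp}\subseteq(\kk^E)^{\ast}$ realizes the dual matroid $\M^{\ast}$; then $\bwedge^p\cQ_L=(\bwedge^p\cS_{L^{\perp}})^{\vee}$ and $H^0$ is computed by Serre duality on $X_E$ (of dimension $|E|-1$, with $\omega_{X_E}$ the negative of the toric boundary) together with a bookkeeping of the relevant $T$-weights. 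Under the standard bijection ``$S$ spanning in $\M$ $\iff$ $E\setminus S$ independent in $\M^{\ast}$'', the claimed polynomial $\sum_{S\text{ spanning}}u^{|E|-|S|}$ is exactly the independence generating polynomial of $\M^{\ast}$, which is what I expect to drop out. This identification is the one place in (i) needing care, since the natural map $\bwedge^p\kk^E\to H^0(\bwedge^p\cQ_L)$ need not be surjective.

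The main obstacle is the higher vanishing, and here I would argue by induction on $|E|$ using the boundary geometry of the permutohedral variety, treating $\cS_L$ and $\cQ_L$ in parallel. Each torus-invariant boundary divisor $D_S$ ($\emptyset\neq S\subsetneq E$) is a product of smaller permutohedral varieties, and under restriction to $D_S$ the tautological bundles decompose in terms of those of the restriction $\M|_S$ and the contraction $\M/S$. Feeding the restriction sequences $0\to\bwedge^p\cS_L(-D_S)\to\bwedge^p\cS_L\to(\bwedge^p\cS_L)|_{D_S}\to 0$---and the Koszul-type resolution of $\mathcal O_{X_E}$ by the boundary strata---into the inductive hypothesis should reduce the vanishing to the acyclicity of an explicit complex built from exterior powers of tautological bundles on smaller permutohedral varieties, twisted by boundary divisors. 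The hard part is controlling these twists, since $\bwedge^p\cS_L(-D_S)$ (and $\bwedge^p\cQ_L(-D_S)$) is not itself a tautological bundle: one must either recognize it, up to acyclicity, as assembled from tautological bundles of auxiliary matroids, or pass to the $T$-equivariant, weight-by-weight description of cohomology on $X_E$, in which each weight space is the cohomology of a combinatorial cochain complex governed by the lattice of flats of $\M$; proving that this complex is concentrated in degree $0$---e.g.\ via shellability of an order complex of flats, or an explicit contracting homotopy---would finish the proof and re-explain the matroid-invariance. If this becomes unwieldy, an alternative is to exhibit a proper morphism $f\colon Y\to X_E$ from an iterated projective bundle with $Rf_{\ast}\mathcal O_Y=\mathcal O_{X_E}$ along which $f^{\ast}\bwedge^p\cS_L$ is acyclic by a Bott-type argument on the tower, reducing the vanishing to a classical computation.
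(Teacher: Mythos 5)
Your direct computation of $H^0(\bwedge^p\cS_L)$ is correct and is a nice argument the paper does not use: since $\bwedge^p\cS_L$ is a subsheaf of the trivial bundle, global sections are constant $p$-vectors $\omega$ with $t\cdot\omega\in\bwedge^pL$ for all $t$, linear independence of characters reduces this to $e_I\in\bwedge^pL$ for each $I$ in the support, and the contraction criterion identifies these $I$ with sets of coloops. Two caveats on part (i): the identification you invoke for $\cQ_L$ is $\cQ_L^\vee\simeq\operatorname{crem}^*\cS_{L^\perp}$, not $\cS_{L^\perp}$ on the nose (the fibers are $tL^\perp$ versus $t^{-1}L^\perp$), and the Serre-duality route to $H^0(\bwedge^p\cQ_L)$ is not carried out --- though, as you note, once higher vanishing is known one can instead read off $\dim H^0=\chi$ from the $K$-theoretic computations of \cite{BEST23}, so part (i) is ultimately fine modulo part (ii).

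The genuine gap is part (ii), which is the entire content of the theorem, and none of your three proposed strategies is executed. The boundary-restriction induction stalls exactly where you say it does: $\bwedge^p\cS_L(-D_S)$ is not assembled from tautological bundles of minors in any usable way, and there are exponentially many boundary divisors to control simultaneously; the equivariant weight-by-weight approach requires proving exactness of a combinatorial complex, which is a restatement of the theorem rather than a proof of it; and no candidate for the iterated projective bundle $Y\to X_E$ is produced. The paper's proof uses a mechanism absent from your proposal: the projection $f\colon X_E\to X_{E\setminus n}$ (the Losev--Manin universal curve), whose fibers are chains of $\PP^1$'s on which $\cS_L$ and $\cQ_L$ are constant except on at most one component, where they are explicit extensions involving $\mathcal O_{\PP^1}(\pm1)$. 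This reduces everything to $H^1$-vanishing on $\PP^1$; Grauert's theorem then identifies $f_*(\bwedge^p\cS_L)$ and $f_*(\bwedge^p\cQ_L)$ with explicit exterior powers of tautological bundles of the deletion and contraction $L\backslash n$, $L/n$ (Theorems~\ref{thm:extdelcont} and \ref{thm:symdelcont}), and the Leray spectral sequence closes the induction on $|E|$, simultaneously producing the deletion--contraction recursion that yields both generating functions. Without this (or an equivalent) reduction, your outline does not constitute a proof of the vanishing.
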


\begin{thm}\label{thm:symcohom}
The symmetric powers of $\cQ_L$ have vanishing higher cohomologies, i.e.\
\[
H^i(\operatorname{Sym}^p \cQ_L) = 0  \quad\text{for all $i > 0$ and $p\geq 0$},
\] 
and we have
\[
\sum_{p \geq 0} \dim H^0(\operatorname{Sym}^p\cQ_L) u^p = \Big(\frac{1}{1-u}\Big)^{|E| - |\text{coloops}(\M)|}
\]
where $u$ is a formal variable.
\end{thm}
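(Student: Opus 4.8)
The plan is to resolve $\operatorname{Sym}^p\cQ_L$ by a Koszul complex built from the tautological \emph{sub}bundle $\cS_L$ and then feed in \Cref{thm:extcohom}. Write $\mathcal E=\mathcal O_{X_E}^{\oplus E}$. Since $\cS_L\hookrightarrow\mathcal E$ is a subbundle with quotient $\cQ_L$, the sheaf of graded algebras $\operatorname{Sym}^\bullet\cQ_L$ is the quotient of $\operatorname{Sym}^\bullet\mathcal E$ by the ideal generated by $\cS_L\subseteq\operatorname{Sym}^1\mathcal E$, and the Koszul complex $\bwedge^\bullet\cS_L\otimes_{\mathcal O_{X_E}}\operatorname{Sym}^\bullet\mathcal E$ is a resolution of it; exactness may be checked locally, where $\cS_L$ is a coordinate subbundle of $\mathcal E$ and hence spanned by part of a system of variables of the polynomial algebra $\operatorname{Sym}^\bullet\mathcal E$, i.e. a regular sequence. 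In symmetric degree $p$ this gives the exact complex
\[
0\to\bwedge^r\cS_L\otimes\operatorname{Sym}^{p-r}\mathcal E\to\cdots\to\cS_L\otimes\operatorname{Sym}^{p-1}\mathcal E\to\operatorname{Sym}^p\mathcal E\to\operatorname{Sym}^p\cQ_L\to0,
\]
with $r=\dim L$ and the convention $\operatorname{Sym}^m\mathcal E:=0$ for $m<0$. As $\mathcal E$ is trivial, $\bwedge^j\cS_L\otimes\operatorname{Sym}^{p-j}\mathcal E\cong(\bwedge^j\cS_L)\otimes_\kk\operatorname{Sym}^{p-j}\kk^E$ is a direct sum of copies of $\bwedge^j\cS_L$, so by \Cref{thm:extcohom} it has vanishing higher cohomology. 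Hence the hypercohomology spectral sequence of the resolution degenerates, and $H^k(X_E,\operatorname{Sym}^p\cQ_L)$ equals the $k$-th cohomology (with $\operatorname{Sym}^p\kk^E$ in degree $0$) of the complex of global sections
\[
\cdots\to H^0(\bwedge^2\cS_L)\otimes\operatorname{Sym}^{p-2}\kk^E\to H^0(\cS_L)\otimes\operatorname{Sym}^{p-1}\kk^E\to\operatorname{Sym}^p\kk^E\to0,
\]
using $H^0(\operatorname{Sym}^m\mathcal E)=\operatorname{Sym}^m\kk^E$.

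The next step is to identify this complex. A global section of $\cS_L$ is a constant vector $v\in H^0(\mathcal E)=\kk^E$ lying in the fiber $t^{-1}L$ for every $\overline t\in X_E$; restricting to the dense torus, this is equivalent to $\overline{Tv}\subseteq L$, i.e. to $\kk^{\operatorname{supp}(v)}\subseteq L$, which holds exactly when every element of $\operatorname{supp}(v)$ is a coloop of $\M$. Thus $H^0(\cS_L)=\kk^{C}$, the coordinate subspace on $C:=\mathrm{coloops}(\M)$. The natural maps $\bwedge^j\kk^C=\bwedge^jH^0(\cS_L)\to H^0(\bwedge^j\cS_L)$ are injective, and since \Cref{thm:extcohom} gives $\dim H^0(\bwedge^j\cS_L)=\binom{|C|}{j}$ they are isomorphisms. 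Under these identifications, and by naturality of the Koszul differential — which on global sections is induced by $\cS_L\to\operatorname{Sym}^1\mathcal E$, i.e. by the inclusion $\kk^C\hookrightarrow\kk^E=\operatorname{Sym}^1\kk^E$ — the complex above is precisely the degree-$p$ graded piece of the Koszul complex $\bwedge^\bullet\kk^C\otimes\kk[e_i:i\in E]$ on the elements $\{e_i:i\in C\}$ inside the polynomial ring $\kk[e_i:i\in E]=\operatorname{Sym}^\bullet\kk^E$.

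Since $\{e_i:i\in C\}$ is part of a system of variables of $\kk[e_i:i\in E]$, it is a regular sequence, so this Koszul complex is acyclic in positive homological degree with $H_0=\kk[e_i:i\in E]/(e_i:i\in C)\cong\kk[e_j:j\in E\setminus C]$. Reading this off in each degree $p$ gives $H^k(X_E,\operatorname{Sym}^p\cQ_L)=0$ for all $k>0$, while the canonical ring homomorphism $\operatorname{Sym}^\bullet\kk^E=\bigoplus_p H^0(\operatorname{Sym}^p\mathcal E)\to\bigoplus_p H^0(\operatorname{Sym}^p\cQ_L)$ induces an isomorphism onto $\kk[e_j:j\in E\setminus C]$; hence
\[
\sum_{p\geq0}\dim H^0(\operatorname{Sym}^p\cQ_L)\,u^p=\frac{1}{(1-u)^{|E\setminus C|}}=\Big(\frac{1}{1-u}\Big)^{|E|-|\mathrm{coloops}(\M)|},
\]
which is the theorem. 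The only delicate point is the identification in the second paragraph — that $H^0(\bwedge^\bullet\cS_L)$ is the exterior algebra on the coloop basis vectors and carries the expected Koszul differential — but granting \Cref{thm:extcohom} this is a short check, so I do not expect a genuine obstacle once that theorem is in hand; the real content is already absorbed into the companion statement about $\cS_L$.
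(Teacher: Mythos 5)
Your argument is correct, but it takes a genuinely different route from the paper's. The paper proves \Cref{thm:symcohom} by induction on $|E|$: it first establishes a pushforward formula for $\operatorname{Sym}^p\cQ_L$ along the projection $f:X_E\to X_{E\setminus n}$ (\Cref{thm:symdelcont}), which rests on a fiberwise computation of $H^0$ and $H^1$ of symmetric powers on the $\PP^1$-components of the fibers (\Cref{prop:symH}) together with Grauert's theorem, and then concludes via the Leray spectral sequence. You instead resolve $\operatorname{Sym}^p\cQ_L$ by the Koszul complex $\bwedge^{\bullet}\cS_L\otimes\operatorname{Sym}^{p-\bullet}\mathcal O_{X_E}^{\oplus E}$ and reduce everything to \Cref{thm:extcohom}; this is legitimate and not circular, since the paper's proof of \Cref{thm:extcohom} is independent of the symmetric-power statements. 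The individual steps check out: the resolution is exact because $\cS_L$ is locally a direct summand of the trivial bundle, so its local generators form part of a system of variables (a characteristic-free argument, in keeping with the paper's aims); the hypercohomology spectral sequence degenerates once each term, being a direct sum of copies of $\bwedge^j\cS_L$, is known to have no higher cohomology; $H^0(\cS_L)=\kk^{C}$ for $C=\text{coloops}(\M)$ because a constant vector $v$ lies in $t^{-1}L$ for all $t$ iff $\kk^{\operatorname{supp}(v)}\subseteq L$ iff $\operatorname{supp}(v)\subseteq C$; the map $\bwedge^j\kk^C\to H^0(\bwedge^j\cS_L)$ is injective (evaluate at a torus point, where $\kk^C\subseteq t^{-1}L$) and hence an isomorphism by the dimension count in \Cref{thm:extcohom}; and the global-sections complex is then the degree-$p$ strand of the Koszul complex on the regular sequence $\{e_i:i\in C\}$, whose homology gives both the vanishing and the generating function. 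What your route buys is brevity and a clean conceptual reduction of the symmetric case to the exterior case, in the same spirit as the Koszul-resolution arguments the paper uses for \Cref{cor:wndcpt}; what it does not give is the finer deletion-contraction structure of \Cref{thm:symdelcont} (including the statement about $f_*\operatorname{Sym}^p\cS_L$), which the paper records as a result of independent interest.
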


In particular, the theorems imply that the cohomologies of exterior and symmetric powers of $\cQ_L$, and those of exterior powers of $\cS_L$, depend only on the matroid $\M$ that $L$ realizes.
One may contrast this to the fact that exterior and symmetric powers of a realization $L$ of $\M$ are not in general determined by the matroid \cite{LV81, Mas81}.

\begin{rem}\label{rem:crem}
Similar results for the dual vector bundles $\cS_L^\vee$ and $\cQ_L^\vee$ can be obtained as follows.
Using the standard dot product on $\kk^E$, let us identify $\kk^E \simeq (\kk^E)^\vee$, so the trivial bundle $(\mathcal O_{X_E}^{\oplus E})^\vee$ is identified with $X_E \times \kk^E$ where $T$ now acts standardly on $\kk^E$.   Denoting $L^\perp$ for the space $(\kk^E/L)^\vee$ considered as a subspace of $\kk^E \simeq (\kk^E)^\vee$, we identify $\cQ_L^\vee$ as the subbundle of $(\mathcal O_{X_E}^{\oplus})^\vee$ whose fiber over $\overline t\in \PP T \subset X_E$ is $t L^{\perp}$.
The permutohedral variety $X_E$ has the \emph{Cremona} involution $\operatorname{crem}: X_E \overset\sim\to X_E$, induced by sending $\overline t\in \PP T$ to $\overline{t^{-1}}$ (see for instance \cite[Section 2.6]{BEST23}).
Our description of $\cQ_L^\vee$ above shows that $\cQ_L^\vee \simeq \operatorname{crem}\cS_{L^\perp}$, and similarly one has $\cS_L^\vee \simeq \operatorname{crem} \cQ_{L^\perp}$.  In particular, symmetric and exterior powers of $\cS_L^\vee$ have vanishing higher cohomologies.
\end{rem}

We prove Theorems~\ref{thm:extcohom} and \ref{thm:symcohom} by establishing a ``deletion-contraction'' property for the tautological bundles, which we now describe.
For a subset $S\subseteq E$, we denote
\begin{align*}
L\backslash S &= \text{ the image of $L$ under the projection $\kk^E \twoheadrightarrow \kk^{E\setminus S}$, and}\\
L/S &= L \cap (\kk^{E\setminus S} \times \{0\}^S), \text{ considered as a subspace of $\kk^{E\setminus S}$}.
\end{align*}
When $S = \{e\}$ is a singleton we often omit the brackets to write $L\backslash e$ and $L/e$, called the \emph{deletion} and \emph{contraction} of $L$ by $e$, respectively.
For an element of $E$, say $n\in E$ for concreteness, there is a natural projection map $f: X_E \to X_{E\setminus n}$ (Definition~\ref{defn:delmap}).
We show the following deletion-contraction property for the pushforward $f_*$ of the tautological bundles of $L$.

\begin{thm}\label{thm:extdelcont}
For all $p \geq 0$, we have
\begin{align*}
R^if_*(\bwedge^p \cS_L) &= 0 \quad\text{for all $i>0$,} \quad\text{and}\\ f_*(\bwedge^p \cS_L) &= \begin{cases}
\bwedge^p(\cS_{L/n} \oplus \mathcal O_{X_{E\setminus n}}) & \text{if $n$ is a coloop in $\M$}\\
\bwedge^p \cS_{L/n} & \text{if $n$ is not a coloop in $\M$}.
\end{cases}
\end{align*}
Similarly, for all $p \geq 0$, we have
\begin{align*}
R^if_*(\bwedge^p \cQ_L) &= 0 \quad\text{for all $i>0$,} \quad\text{and}\\ f_*(\bwedge^p \cQ_L) &= \begin{cases}
\bwedge^p(\cQ_{L\backslash n} \oplus \mathcal O_{X_{E\setminus n}}) & \text{if $n$ is a loop in $\M$}\\
\bwedge^p\cQ_{L/n} & \text{if $n$ is a coloop in $\M$}\\
\bwedge^p\cQ_{L/n} \oplus \bwedge^{p-1}\cQ_{L\setminus n} & \text{if $n$ is neither a loop nor a coloop in $\M$}.
\end{cases}
\end{align*}
\end{thm}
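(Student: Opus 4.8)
The plan is to prove Theorem~\ref{thm:extdelcont} by induction on $|E|$, simultaneously with Theorem~\ref{thm:extcohom} (the case $|E|\le 1$, where $X_E$ is a point, being trivial), so that while treating a ground set of size $n$ we may freely use the cohomology vanishing of Theorem~\ref{thm:extcohom} for the smaller ground set $E\setminus n$. Write $\kk^E=\kk^{E\setminus n}\oplus\kk e_n$, so $\mathcal O_{X_E}^{\oplus E}=\mathcal O_{X_E}^{\oplus(E\setminus n)}\oplus\mathcal O_{X_E}$, and work fiberwise over $\PP T$. If $n$ is a loop of $L$ then $L\subseteq\kk^{E\setminus n}$ and one reads off directly that $\cS_L=f^*\cS_{L/n}$ and $\cQ_L=f^*\cQ_{L\backslash n}\oplus\mathcal O_{X_E}$; if $n$ is a coloop then $L=L'\oplus\kk e_n$ with $L'=L/n=L\backslash n$, and similarly $\cS_L=f^*\cS_{L/n}\oplus\mathcal O_{X_E}$ and $\cQ_L=f^*\cQ_{L/n}$. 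In the remaining case, where $n$ is neither a loop nor a coloop, the inclusion $\kk^{E\setminus n}\hookrightarrow\kk^E$ gives a subbundle inclusion $f^*\cS_{L/n}\hookrightarrow\cS_L$ whose quotient $\mathcal L:=\cS_L/f^*\cS_{L/n}$ is a line bundle; composing $\cS_L$ with the projection onto the $n$-th factor realizes $\mathcal L$ as a subsheaf of $\mathcal O_{X_E}$ which is trivial over $\PP T$, so $\mathcal L\cong\mathcal O_{X_E}(-D)$ for an effective $T$-invariant boundary divisor $D$. This gives
\begin{equation}\label{eq:Sseq}
0\longrightarrow f^*\cS_{L/n}\longrightarrow\cS_L\longrightarrow\mathcal O_{X_E}(-D)\longrightarrow 0 .
\end{equation}
Dually, the projection $\mathcal O_{X_E}^{\oplus E}\twoheadrightarrow\mathcal O_{X_E}^{\oplus(E\setminus n)}$ induces a surjection $\cQ_L\twoheadrightarrow f^*\cQ_{L\backslash n}$ whose kernel is a line bundle receiving an injection from $\mathcal O_{X_E}$ (the class of $e_n$), trivial over $\PP T$, hence isomorphic to $\mathcal O_{X_E}(D')$ for an effective $T$-invariant boundary divisor $D'$; this gives
\begin{equation}\label{eq:Qseq}
0\longrightarrow\mathcal O_{X_E}(D')\longrightarrow\cQ_L\longrightarrow f^*\cQ_{L\backslash n}\longrightarrow 0 .
\end{equation}

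Next I would take exterior powers of \eqref{eq:Sseq} and \eqref{eq:Qseq}. Since $\bwedge^{\ge 2}$ of a line bundle vanishes, taking $\bwedge^p$ yields, for all $p$, short exact sequences
\[
0\to f^*\bwedge^p\cS_{L/n}\to\bwedge^p\cS_L\to f^*\bwedge^{p-1}\cS_{L/n}\otimes\mathcal O_{X_E}(-D)\to 0
\]
and
\[
0\to\mathcal O_{X_E}(D')\otimes f^*\bwedge^{p-1}\cQ_{L\backslash n}\to\bwedge^p\cQ_L\to f^*\bwedge^p\cQ_{L\backslash n}\to 0 .
\]
Applying $Rf_*$ together with the projection formula, using the standard fact $Rf_*\mathcal O_{X_E}=\mathcal O_{X_{E\setminus n}}$ for the toric morphism $f$, and noting that $f$ has one-dimensional fibers (so $R^{\ge 2}f_*=0$), reduces Theorem~\ref{thm:extdelcont} in the case ``$n$ neither a loop nor a coloop'' to computing the two line-bundle pushforwards $Rf_*\mathcal O_{X_E}(-D)$ and $Rf_*\mathcal O_{X_E}(D')$.

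This last computation is where I expect the main obstacle to lie. Using the description of the fibers of $\cS_L$ and $\cQ_L$ over the torus orbits of $X_E$ in terms of the matroids $\M|_S$ and $\M/S$ (as in \cite{BEST23}), one identifies $D$ and $D'$ explicitly as sums of boundary divisors $D_S$ of $X_E$: $D=\sum D_S$ over the $S$ with $n\notin S$ and $n$ in the $\M$-closure of $S$ (so $D$ contains $D_{E\setminus n}$ but not $D_{\{n\}}$), and $D'=\sum D_S$ over the $S$ with $n\in S$ and $n$ not in the $\M$-closure of $S\setminus n$ (so $D'$ contains $D_{\{n\}}$ but not $D_{E\setminus n}$). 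The boundary divisors $D_{\{n\}}$ and $D_{E\setminus n}$ are sections of $f$ while all other $D_S$ are ``horizontal'', so the generic fiber $\PP^1$ of $f$ meets $D$ (resp.\ $D'$) transversally in one point; thus $\mathcal O_{X_E}(-D)$ has degree $-1$ and $\mathcal O_{X_E}(D')$ has degree $+1$ on it. Running the same bookkeeping over the deeper strata of $X_{E\setminus n}$, where the fibers of $f$ are trees of $\PP^1$'s, and invoking cohomology and base change, one obtains $Rf_*\mathcal O_{X_E}(-D)=0$ and $Rf_*\mathcal O_{X_E}(D')\cong\mathcal O_{X_{E\setminus n}}\oplus\mathcal N$ concentrated in degree $0$, where $\mathcal N:=\cS_{L\backslash n}/\cS_{L/n}$ is the line bundle on $X_{E\setminus n}$ fitting in $0\to\cS_{L/n}\to\cS_{L\backslash n}\to\mathcal N\to 0$, equivalently $0\to\mathcal N\to\cQ_{L/n}\to\cQ_{L\backslash n}\to 0$ (these exist because $L/n\subseteq L\backslash n$). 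An alternative route for Step~3 is to factor $f$ as $X_E\to X_{E\setminus n}\times\PP^1\to X_{E\setminus n}$, the first map being the toric birational contraction onto the product and the second the projection, and to carry out the computation there.

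Finally I would assemble the isomorphisms. Feeding $Rf_*\mathcal O_{X_E}(-D)=0$ into the $\cS$-sequence of Step~2 gives immediately $R^if_*\bwedge^p\cS_L=0$ for $i>0$ and $f_*\bwedge^p\cS_L=\bwedge^p\cS_{L/n}$. Feeding $Rf_*\mathcal O_{X_E}(D')=\mathcal O_{X_{E\setminus n}}\oplus\mathcal N$ into the $\cQ$-sequence gives $R^if_*\bwedge^p\cQ_L=0$ for $i>0$ together with a short exact sequence $0\to\bwedge^{p-1}\cQ_{L\backslash n}\oplus(\mathcal N\otimes\bwedge^{p-1}\cQ_{L\backslash n})\to f_*\bwedge^p\cQ_L\to\bwedge^p\cQ_{L\backslash n}\to 0$. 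On the other hand the inclusion $\kk^{E\setminus n}\hookrightarrow\kk^E$ gives a natural injection $f^*\cQ_{L/n}\hookrightarrow\cQ_L$, hence a natural map $\bwedge^p\cQ_{L/n}\to f_*\bwedge^p\cQ_L$ which is an isomorphism over the torus of $X_{E\setminus n}$ and compatible with the maps down to $\bwedge^p\cQ_{L\backslash n}$; comparing this with the exterior-power filtration of $0\to\mathcal N\to\cQ_{L/n}\to\cQ_{L\backslash n}\to 0$, which yields $0\to\mathcal N\otimes\bwedge^{p-1}\cQ_{L\backslash n}\to\bwedge^p\cQ_{L/n}\to\bwedge^p\cQ_{L\backslash n}\to 0$, together with a rank count identifies $f_*\bwedge^p\cQ_L$ with $\bwedge^p\cQ_{L/n}\oplus\bwedge^{p-1}\cQ_{L\backslash n}$. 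The splittings used here are the natural ones carried by the splitting $Rf_*\mathcal O_{X_E}(D')=\mathcal O_{X_{E\setminus n}}\oplus\mathcal N$, and wherever a cohomological vanishing on $X_{E\setminus n}$ is nonetheless needed it is supplied by Theorem~\ref{thm:extcohom} for the ground set $E\setminus n$. The loop and coloop cases follow directly from the identifications in Step~1 together with $Rf_*\mathcal O_{X_E}=\mathcal O_{X_{E\setminus n}}$, completing the induction.
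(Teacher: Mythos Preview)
Your global short-exact-sequence strategy is a genuine alternative to the paper's approach. The paper never writes down the sequences \eqref{eq:Sseq}--\eqref{eq:Qseq} on $X_E$; instead it restricts $\cS_L$ and $\cQ_L$ to each fiber of $f$, observes (Proposition~\ref{prop:constant}) that the bundle is trivial on all $\PP^1$-components but one, computes $H^0$ and $H^1$ on that one component via a $\PP^1$-analogue of your sequences (Lemma~\ref{lem:P1} and Proposition~\ref{prop:wedgeH0}), and then invokes Grauert's theorem together with the \emph{natural} identification of $H^0$ on each fiber with the fiber of the candidate target bundle. For the $\cS_L$ half and for all the vanishing statements your route works cleanly once $Rf_*\mathcal O_{X_E}(-D)=0$ is known, and that computation indeed reduces to the same fiberwise ``one component of degree $-1$, the rest degree $0$'' picture that underlies the paper's argument; so here the two approaches are essentially equivalent reorganizations of the same content.

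The gap is in the $\cQ_L$ identification. Your assertion that $\kk^{E\setminus n}\hookrightarrow\kk^E$ induces an \emph{injection} $f^*\cQ_{L/n}\hookrightarrow\cQ_L$ is false: the natural map exists but need not be fiberwise injective. Concretely, take $E=\{1,2,3\}$, $n=3$, and $L=\operatorname{span}\{(1,0,1),(0,1,0)\}$; at the torus-fixed point of $X_E$ indexed by the flag $(\{1\},\{3\},\{2\})$ one computes $L_{\mathscr F}=\kk^{\{1\}}\oplus 0\oplus\kk^{\{2\}}$ while $(L/3)_{\mathscr F'}=0\oplus\kk^{\{2\}}$, so the map $\kk^{\{1,2\}}/(L/3)_{\mathscr F'}\to\kk^E/L_{\mathscr F}$ is the zero map. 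Consequently your final assembly --- ``comparing with the exterior-power filtration of $0\to\mathcal N\to\cQ_{L/n}\to\cQ_{L\backslash n}\to 0$ together with a rank count'' --- does not go through as stated: two vector bundles sitting in short exact sequences with isomorphic sub and quotient need not be isomorphic, and a rank count cannot repair this. What \emph{can} be salvaged is that the adjoint map $\bwedge^p\cQ_{L/n}\to f_*\bwedge^p\cQ_L$ may still be a fiberwise injection (a section can vanish at a point of the fiber without being the zero section), but verifying this, and then checking that the combined map $\bwedge^p\cQ_{L/n}\oplus\bwedge^{p-1}\cQ_{L\backslash n}\to f_*\bwedge^p\cQ_L$ is an isomorphism on each fiber, amounts exactly to the content of Proposition~\ref{prop:wedgeH0} in the paper. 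In other words, your Step~4 cannot be completed without carrying out the paper's fiberwise $H^0$ computation for $\cQ'_L$, so the global-SES packaging does not ultimately shortcut that work.
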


\begin{thm}\label{thm:symdelcont}
For all $p\geq 0$, we have
\[
f_*\operatorname{Sym}^p\cS_L = \begin{cases}
\operatorname{Sym}^p(\cS_{L/n} \oplus \mathcal O_{X_{E\setminus n}}) &\text{if $n$ a coloop}\\
\operatorname{Sym}^p\cS_{L/n} &\text{if $n$ not a coloop}.
\end{cases}
\]
For all $p \geq 0$, we have
\begin{align*}
R^i f_*\operatorname{Sym}^p\cQ_L &= 0 \quad\text{for all $i > 0$,}\quad \text{and}\\
f_* \operatorname{Sym}^p\cQ_L &= \begin{cases}
\operatorname{Sym}^p \cQ_{L/n} & \text{if $n$ a coloop}\\
\operatorname{Sym}^p(\cQ_{L/n} \oplus \mathcal O_{X_{E\setminus n}}) &\text{if $n$ not a coloop}.
\end{cases}
\end{align*}
\end{thm}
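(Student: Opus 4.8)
The plan is to reduce both claims to computations of $f_*$ and $R^{\bullet}f_*$ along $f\colon X_E\to X_{E\setminus n}$, by comparing each tautological bundle on $X_E$ with the $f$-pullback of its counterpart on $X_{E\setminus n}$. The two facts about $f$ itself that I would use are: $f_*\mathcal O_{X_E}=\mathcal O_{X_{E\setminus n}}$ and $R^{i}f_*\mathcal O_{X_E}=0$ for $i>0$ (standard for a proper toric morphism with smooth source); and that the only torus-invariant prime divisors of $X_E$ contracted by $f$ are $D_{\{n\}}$ and $D_{E\setminus n}$ (corresponding to the two rays of the fan of $X_E$ lying in the kernel of the induced map of cocharacter lattices), with $f$ restricting to an isomorphism on each of $D_{\{n\}}$ and $D_{E\setminus n}$, so that these are sections of $f$.

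Next I would build the comparison sequences from the fiberwise pictures over $\PP T$. Writing $t'$ for the image of $t$ under $(\kk^*)^E\to(\kk^*)^{E\setminus n}$: the inclusion $\kk^{E\setminus n}\hookrightarrow\kk^E$ sends $(t')^{-1}(L/n)$ into $t^{-1}L$, and the projection $\kk^E\twoheadrightarrow\kk^{E\setminus n}$ sends $t^{-1}L$ onto $(t')^{-1}(L\backslash n)$; these extend to $T$-equivariant maps of vector bundles $f^*\cS_{L/n}\to\cS_L$ and $\cQ_L\to f^*\cQ_{L\backslash n}$ on all of $X_E$. There are three cases. If $n$ is a coloop, then $L=L'\oplus\kk\be_n$ with $L'=L/n=L\backslash n$, and the fiberwise picture upgrades to honest splittings $\cS_L\cong f^*(\cS_{L/n}\oplus\mathcal O_{X_E})$ and $\cQ_L\cong f^*\cQ_{L/n}$. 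If $n$ is a loop, then $L\subseteq\kk^{E\setminus n}$ with $L/n=L\backslash n=L$, and $\cS_L\cong f^*\cS_{L/n}$, $\cQ_L\cong f^*(\cQ_{L\backslash n}\oplus\mathcal O_{X_E})$. If $n$ is neither, then one checks that $f^*\cS_{L/n}\hookrightarrow\cS_L$ is a subbundle inclusion with line-bundle quotient $\mathcal L$, and $\cQ_L\to f^*\cQ_{L\backslash n}$ is a surjection with line-bundle kernel $\mathcal K$, giving genuine short exact sequences of vector bundles on $X_E$. In the coloop and loop cases the theorem is then immediate: apply $\operatorname{Sym}^p$ (it commutes with $f^*$, and $\operatorname{Sym}^p(\mathcal E\oplus\mathcal O)=\bigoplus_{k\le p}\operatorname{Sym}^k\mathcal E$) and push forward via the projection formula and the two facts about $f$.

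The crux is the "neither" case. First I would pin down $\mathcal L$ and $\mathcal K$ as explicit $T$-equivariant line bundles supported combinatorially on $D_{\{n\}}$ and $D_{E\setminus n}$ — this step is really about the $T$-equivariant structure, since the tautological bundles are prescribed only over $\PP T$. I expect $\mathcal L\cong\mathcal O_{X_E}(-D_{E\setminus n})$ up to an $f^*$-twist, and $\mathcal K\cong f^*\mathcal N\otimes\mathcal O_{X_E}(D_{\{n\}})$, where $\mathcal N$ is the line bundle in the natural short exact sequence $0\to\mathcal N\to\cQ_{L/n}\to\cQ_{L\backslash n}\to0$ on $X_{E\setminus n}$ (arising from $L/n\subsetneq L\backslash n$). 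Since $D_{\{n\}}$ and $D_{E\setminus n}$ are sections of $f$, I would then compute — chart by chart on $X_{E\setminus n}$, over each of which the fibers of $f$ are rational chains of $\PP^1$'s meeting the sections transversely — that $f_*\mathcal L^{\otimes m}=0$ for $m\ge1$, and that $f_*\mathcal K^{\otimes\ell}\cong\operatorname{Sym}^{\ell}(\mathcal O_{X_{E\setminus n}}\oplus\mathcal N)$ with $R^{>0}f_*\mathcal K^{\otimes\ell}=0$. Filtering $\operatorname{Sym}^p\cS_L$ by powers of $\mathcal L$, with associated graded $\bigoplus_{k}f^*\operatorname{Sym}^k\cS_{L/n}\otimes\mathcal L^{\otimes(p-k)}$: after $f_*$ every piece with $p-k\ge1$ vanishes, so the filtration steps become isomorphisms and one obtains $f_*\operatorname{Sym}^p\cS_L=\operatorname{Sym}^p\cS_{L/n}$ (no higher-direct-image vanishing is needed here, matching the statement). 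Filtering $\operatorname{Sym}^p\cQ_L$ using $0\to\mathcal K\to\cQ_L\to f^*\cQ_{L\backslash n}\to0$, the graded is $\bigoplus_{k}\mathcal K^{\otimes k}\otimes f^*\operatorname{Sym}^{p-k}\cQ_{L\backslash n}$; since $R^{>0}f_*$ vanishes on each piece, $f_*$ is exact along the filtration and the graded of $f_*\operatorname{Sym}^p\cQ_L$ becomes $\bigoplus_{k}\operatorname{Sym}^k(\mathcal O\oplus\mathcal N)\otimes\operatorname{Sym}^{p-k}\cQ_{L\backslash n}$, which is exactly the graded of $\operatorname{Sym}^p(\cQ_{L/n}\oplus\mathcal O_{X_{E\setminus n}})$ under the filtration coming from $0\to\mathcal N\to\cQ_{L/n}\to\cQ_{L\backslash n}\to0$ (a rank count $\sum_{k=0}^{p}(k+1)\operatorname{rk}\operatorname{Sym}^{p-k}\cQ_{L\backslash n}=\operatorname{rk}\operatorname{Sym}^p(\cQ_{L/n}\oplus\mathcal O)$ is a useful check). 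A comparison of extensions — feeding in the inductive hypothesis that symmetric powers of tautological bundles on $X_{E\setminus n}$ have vanishing higher cohomology — upgrades this to an isomorphism, and $R^{>0}f_*\operatorname{Sym}^p\cQ_L=0$ follows from the pointwise vanishing on the graded pieces.

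The step I expect to be the main obstacle is the explicit identification of $\mathcal L$, $\mathcal K$, $\mathcal N$ together with the computation $f_*\mathcal K^{\otimes\ell}\cong\operatorname{Sym}^{\ell}(\mathcal O\oplus\mathcal N)$: because the tautological bundles are defined only fiberwise over the open torus, getting the "neither" case to come out as stated — rather than as some other sheaf of the same rank — requires genuine control of how these bundles extend across the boundary of $X_E$, i.e.\ of their $T$-equivariant structure. A compounding difficulty is that $f$ is not flat — it has reducible fibers, already for $X_{\{1,2,3\}}\to X_{\{1,2\}}$, which is the degree-$6$ del Pezzo surface fibered over $\PP^1$ with two reducible fibers — so the pushforward computations must be carried out chart by chart rather than read off a projective-bundle formula; this is where most of the work lies.
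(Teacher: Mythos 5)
Your strategy of globalizing the fiberwise inclusions to exact sequences $0\to f^*\cS_{L/n}\to\cS_L\to\mathcal L\to 0$ and $0\to\mathcal K\to\cQ_L\to f^*\cQ_{L\backslash n}\to 0$ on $X_E$ and pushing forward the symmetric-power filtrations is a legitimate alternative to the paper's route, and the loop/coloop cases and the $\cS_L$ half of the remaining case are essentially sound (granting the unproved claim that the generic inclusion extends to a subbundle inclusion over the boundary). But the step you yourself flag as the crux contains a concrete error: $\mathcal L$ and $\mathcal K$ are \emph{not} of the form $\mathcal O_{X_E}(-D_{E\setminus n})\otimes f^*(\cdots)$ and $f^*\mathcal N\otimes\mathcal O_{X_E}(D_{\{n\}})$. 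On a fiber $f^{-1}(t\cdot p_{\mathscr F})$, a chain of $\ell$ rational curves, $\mathcal L$ has degree $-1$ exactly on the component $C(t,k)$ where $\cS_L$ is non-constant, and by Proposition~\ref{prop:constant} this index $k$ varies with $\mathscr F$: for $L=\operatorname{span}(\be_1+\be_3)\subseteq\kk^{\{1,2,3\}}$ and $n=3$ one finds $k=1$ over $p_{(\{1\},\{2\})}$ but $k=2$ over $p_{(\{2\},\{1\})}$. The divisors $D_{\{n\}}$ and $D_{E\setminus n}$ always meet the first and last components of the chain, and an $f^*$-twist does not change fiber degrees, so no identification of the proposed shape can hold; the correct statement is $\mathcal L^{-1}\cong\det\cQ_L\otimes f^*\det\cQ_{L/n}^{-1}$ (non-equivariantly), which is not supported on the two sections. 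There is also an internal inconsistency: if $\mathcal K\cong f^*\mathcal N\otimes\mathcal O(D_{\{n\}})$ then $f_*\mathcal K^{\otimes\ell}\cong\mathcal N^{\otimes\ell}\otimes f_*\mathcal O(\ell D_{\{n\}})$, and $f_*\mathcal O(\ell D_{\{n\}})$ is independent of $L$, so it cannot equal $\bigoplus_{j\leq\ell}\mathcal N^{\otimes j}$ for general $\mathcal N$. Since your computation of $f_*\mathcal K^{\otimes\ell}$ rests on this model, the $\cQ_L$ half of the ``neither'' case does not go through as written.

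Two further problems. First, your closing claim that $f$ is not flat is false: $f$ is flat (Corollary~\ref{cor:fibers}; smooth source and target with equidimensional fibers), and reducible fibers are no obstruction. This matters because flatness is what licenses the cleanest tool here, Grauert's theorem, which is exactly how the paper argues: it restricts the bundle to a fiber, notes it is non-constant on exactly one $\PP^1$-component, computes the cohomology there via Lemma~\ref{lem:P1} and Propositions~\ref{prop:wedgeH0}--\ref{prop:symH}, checks the answer is naturally the fiber of the claimed pushforward, and invokes Grauert. Second, your final ``comparison of extensions'' is not yet an argument: knowing that $f_*\operatorname{Sym}^p\cQ_L$ and $\operatorname{Sym}^p(\cQ_{L/n}\oplus\mathcal O_{X_{E\setminus n}})$ have the same associated graded does not identify them, and vanishing of higher cohomology of tautological bundles controls neither the relevant $\operatorname{Ext}^1$ groups between graded pieces nor produces a natural filtered map. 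To close that gap you would need to exhibit a natural global morphism inducing the isomorphism and verify it fiberwise---at which point you have rejoined the paper's fiberwise computation.
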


\begin{proof}[Proof of Theorems~\ref{thm:extcohom} and \ref{thm:symcohom}, assuming Theorems~\ref{thm:extdelcont} and \ref{thm:symdelcont}]
We induct on the cardinality of $E$, where the statements in the base case $|E| = 1$ are straightforward since $X_E$ is a point in that case.
When $|E|>1$, for all $p\geq 0$, the Leray spectral sequence $E_2^{a,b} = H^a(X_{E\setminus n}, R^b f_* (\bwedge^p\cS_L))$ satisfies $E_2^{a,b} = 0$ for all $b >0$ by Theorem~\ref{thm:extdelcont}, so that
\[
H^i(X_E, \bwedge^p\cS_L) \simeq H^i(X_{E\setminus n}, f_* (\bwedge^p\cS_L)) \quad\text{for all $i\geq 0$}.
\]
Similar statements hold for $\bwedge^p\cQ_L$ and $\operatorname{Sym}^p\cQ_L$ by the same argument.
From the formula for the pushforward $f_*$ of these bundles in Theorems~\ref{thm:extdelcont} and \ref{thm:symdelcont}, we conclude by induction hypothesis the vanishing of higher cohomologies.
Moreover, the formula for $f_*(\bwedge^p \cS_L)$ implies that the polynomial $g(L,u) = \sum_{p \geq 0} \dim H^0(\bwedge^p \cS_L) u^p$ satisfies the relation
\[
g(L,u) =
\begin{cases}
(u+1) \cdot g(L/n, u)& \text{if $n$ is a coloop in $\M$}\\
g(L/n,u) & \text{if $n$ is not a coloop in $\M$},
\end{cases}
\]
hence $g(L,u) = (u+1)^{|\text{coloops}(\M)|}$.
One similarly computes $\sum_{p \geq 0} \dim H^0(\operatorname{Sym}^p \cQ_L) u^p$.
Lastly, the formula for $f_*(\bwedge^p\cQ_L)$ implies that the polynomial $h(L,u) = \sum_{p\geq 0} \dim H^0(\bwedge^p \cQ_L) u^p$ satisfies
\[
h(L,u) =
\begin{cases}
(u+1) \cdot h(L\backslash n, u) & \text{if $n$ is a loop in $\M$}\\
h(L/n, u)& \text{if $n$ is a coloop in $\M$}\\
u \cdot h(L\backslash n, u)+ h(L/n,u) & \text{if $n$ is neither a loop nor a coloop in $\M$}.
\end{cases}
\]
Feeding this into the recipe formula for deletion-contraction invariants \cite[Theorem 2.16]{Wel99} gives $h(L,u) = u^{|E| - r} \mathrm{T}_\M(1, 1+ u^{-1})$ where $\mathrm{T}_\M$ is the \emph{Tutte polynomial} of $\M$, whose corank-nullity description [(2.13), loc.~cit.] gives the desired formula for $h(L,u)$.
\end{proof}

Introduced in \cite{dCP95}, a \emph{wonderful compactification} (Definition~\ref{defn:wndcpt}) of $L\subseteq \kk^E$ is a compactification $W_L$ of $\PP L \cap \PP T$ that served as a key geometric model behind the Hodge theory of matroids \cite{FY04, AHK18}.
Its boundary $\partial W_L = W_L \setminus (\PP L \cap \PP T)$ is a simple normal crossings divisor.
We use Theorem~\ref{thm:extcohom} to deduce the following.

\begin{cor}\label{cor:wndcpt}
The log canonical divisor $K_{W_L} + \partial W_L$ of a wonderful compactification $W_L$ of $L$ has vanishing higher cohomologies, i.e.
\[
H^i(\mathcal O_{W_L}(K_{W_L} + \partial W_L)) = 0 \quad\text{for all $i > 0$,}
\]
and we have
\[
\dim H^0(\mathcal O_{W_L}(K_{W_L} + \partial W_L)) = \sum_{\substack{S\subseteq E \\ \text{$S$ contains }\\ \text{a basis of $\M$}}}(-1)^{|S|-r}.
\]
\end{cor}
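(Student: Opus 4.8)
\emph{Proof proposal for \cref{cor:wndcpt}.}

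The plan is to realize the wonderful compactification $W_L$ as the zero scheme of a natural global section of $\cQ_L$ on the permutohedral variety $X_E$, and then to combine adjunction with the vanishing theorem \cref{thm:extcohom} applied to the Koszul complex of that section. Throughout I assume $\M$ is loopless, so that the arrangement complement $\PP L \cap \PP T$ is nonempty.

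First I would recall, with $W_L$ taken to be the closure of $\PP L \cap \PP T$ inside $X_E$ (as in \cref{defn:wndcpt} and \cite{dCP95}), that this closure is the regular zero locus of a section of $\cQ_L$. Concretely, the all-ones vector $\mathbf 1 = (1,\dots,1) \in \kk^E$ is a global section of the trivial bundle $\mathcal O_{X_E}^{\oplus E}$, and its image $s$ under $\mathcal O_{X_E}^{\oplus E} \twoheadrightarrow \cQ_L$ is a global section of $\cQ_L$ which, over $\overline t \in \PP T$, vanishes exactly when $\mathbf 1 \in t^{-1}L$, i.e.\ when $t \in L$; hence $Z(s)\cap \PP T = \PP L \cap \PP T$. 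I would then invoke that $s$ is a regular section with $Z(s) = W_L$ (reduced, irreducible, of dimension $r-1$), so that the Koszul complex of $s$ is a resolution
\[
0 \to \bwedge^{c}\cQ_L^\vee \to \bwedge^{c-1}\cQ_L^\vee \to \cdots \to \cQ_L^\vee \to \mathcal O_{X_E} \to \mathcal O_{W_L} \to 0, \qquad c := |E| - r = \operatorname{rank}\cQ_L .
\]

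Next I would run adjunction. Since $W_L$ is smooth and is the regular vanishing locus of $s$, its normal bundle in $X_E$ is $\cQ_L|_{W_L}$, so $\omega_{W_L} \cong \omega_{X_E}|_{W_L} \otimes \det\cQ_L|_{W_L}$. As $X_E$ is a smooth projective toric variety, $\omega_{X_E} \cong \mathcal O_{X_E}(-\partial X_E)$ with $\partial X_E$ the reduced toric boundary; since $W_L$ meets $\partial X_E$ transversally with $(\partial X_E)|_{W_L} = \partial W_L$ (this is part of the wonderful/tropical compactification picture, cf.\ \cite{dCP95,BEST23}), we get $\omega_{X_E}|_{W_L}\cong \mathcal O_{W_L}(-\partial W_L)$ and hence $\mathcal O_{W_L}(K_{W_L}+\partial W_L) \cong \det\cQ_L|_{W_L}$. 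Twisting the Koszul resolution by $\det\cQ_L$ and using $\bwedge^{c-k}\cQ_L^\vee \otimes \det\cQ_L \cong \bwedge^{k}\cQ_L$ then presents $\mathcal O_{W_L}(K_{W_L}+\partial W_L)$ as the cokernel of a length-$c$ complex built from $\mathcal O_{X_E}, \cQ_L, \bwedge^2\cQ_L, \dots, \bwedge^{c}\cQ_L$. Each of these has vanishing higher cohomology by \cref{thm:extcohom}, so the hypercohomology spectral sequence of the twisted resolution collapses, giving $H^i(\mathcal O_{W_L}(K_{W_L}+\partial W_L)) = 0$ for $i>0$ and
\[
\dim H^0\big(\mathcal O_{W_L}(K_{W_L}+\partial W_L)\big) = \sum_{k=0}^{c}(-1)^{c-k}\dim H^0(\bwedge^{k}\cQ_L).
\]
Substituting the generating-function formula $\sum_{k\ge 0}\dim H^0(\bwedge^k\cQ_L)\,u^k = \sum_{S} u^{|E|-|S|}$ from \cref{thm:extcohom} (sum over $S\subseteq E$ containing a basis of $\M$) and reindexing by $|S| = |E|-k$, for which $c-k = |S|-r$, turns the right-hand side into $\sum_{S}(-1)^{|S|-r}$, the claimed formula.

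The main obstacle is the first step: pinning down that $W_L$ really is the scheme-theoretic, reduced, regular zero locus $Z(s)$, and that $(\partial X_E)|_{W_L}$ is exactly the reduced divisor $\partial W_L$. On the open torus $Z(s)\cap \PP T = \PP L \cap \PP T$ is already smooth of dimension $r-1$, so the content is that $Z(s)$ gains no spurious components, embedded primes, or multiplicities along the toric boundary; I would extract this (and the transversality with $\partial X_E$) from \cite{BEST23}, where $W_L$ is analyzed through the tautological bundles, or else from the compatible iterated blow-up descriptions of $X_E$ (from $\PP(\kk^E)$) and of $W_L$ (from $\PP L$) in \cite{dCP95}. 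Everything downstream of that identification is then formal.
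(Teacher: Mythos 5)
Your argument is, for the most part, the paper's own: realize the wonderful compactification inside $X_E$ as the regular zero locus of a section of $\cQ_L$, identify $\mathcal O(K+\partial)$ with $\det\cQ_L$ restricted to it, twist the Koszul resolution by $\det\cQ_L$ to get a resolution of $\mathcal O_{W_L}(K_{W_L}+\partial W_L)$ by $\mathcal O_{X_E}, \cQ_L, \dots, \det\cQ_L$, and conclude via Theorem~\ref{thm:extcohom}; the alternating-sum reindexing at the end is also correct. The one genuinely different (and fine) detail is how you obtain $\mathcal O_{W_L}(K_{W_L}+\partial W_L)\simeq \det\cQ_L|_{W_L}$: you use adjunction for the normal bundle together with transversality to the toric boundary, whereas the paper quotes the identification $\det\mathcal T_{W_L^{\mathcal P}}(-\log \partial W_L^{\mathcal P})\simeq \det\cS_L|_{W_L^{\mathcal P}}$ from \cite{BEST23} and dualizes. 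The "main obstacle" you flag (regularity/reducedness of $Z(s)$ and the boundary identification) is exactly what \cite[Theorems 7.10 and 8.8]{BEST23} supply, so that part is not a real gap.

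There is, however, a genuine gap of scope. A wonderful compactification in the sense of Definition~\ref{defn:wndcpt} depends on a choice of building set $\mathcal G\subseteq \mathcal P\setminus\{\hat 0,\hat 1\}$, and only for the \emph{maximal} building set is $W_L^{\mathcal P}$ the closure of $\PP L\cap \PP T$ in $X_E$, i.e.\ the zero locus of the section of $\cQ_L$. Your proof therefore establishes the corollary only for $W_L^{\mathcal P}$. For a general $\mathcal G$ (e.g.\ the minimal building set of connected flats, which is the one giving $\overline{\mathcal M}_{0,n}$), one must first reduce to the maximal case: there is a sequence of blow-downs $\pi: W_L^{\mathcal P}\to W_L^{\mathcal G}$ along smooth boundary strata (Lemma~\ref{lem:wndcpt}(1)); the blow-up formula $K_{\widetilde X}=\varphi^*K_X+(\operatorname{codim}-1)E$ applied to each step shows $\pi^*(K_{W_L^{\mathcal G}}+\partial W_L^{\mathcal G})=K_{W_L^{\mathcal P}}+\partial W_L^{\mathcal P}$, and $\pi_*\mathcal O_{W_L^{\mathcal P}}=\mathcal O_{W_L^{\mathcal G}}$ with $R^i\pi_*\mathcal O_{W_L^{\mathcal P}}=0$ for $i>0$; the projection formula then transports all cohomology groups. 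Without this reduction your argument does not cover the statement as claimed; with it, the rest of your proof goes through verbatim.
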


The moduli space $\overline{M}_{0,n}$ of pointed rational curves arises as a wonderful compactification of a linear subspace whose matroid is the cyclic matroid of the complete graph on $n-1$ vertices \cite[Section 4.3 Remarks (3)]{dCP95} (see also \cite{Kee92}).
Hence, the corollary in particular implies that the log canonical divisor of $\overline{M}_{0,n}$ has vanishing higher cohomologies, and recovers the classical result that $\dim H^0(\mathcal O_{\overline{{M}}_{0,n}}(K_{\overline{{M}}_{0,n}} + \partial{\overline{{M}}_{0,n}})) = (n-2)!$.

\begin{rem}\label{rem:speyer}
Corollary~\ref{cor:wndcpt} is the ``dual version'' of the following outstanding question in matroid theory due to Speyer about the \emph{anti} log canonical divisor.
Speyer asked whether
\[
(-1)^{r-1} \chi \big( \mathcal O_{W_L}(-K_{W_L} - \partial W_L)\big) \geq 0
\]
for all $L\subseteq \kk^E$ such that its matroid $\M$ is a connected matroid.\footnote{This is an equivalent formulation of the original question, which asked whether \cite[Proposition 3.3]{Spe09} holds over positive characteristic.  We omit the details of the equivalence, which was communicated to the author by David Speyer.}
One can ask more strongly whether $H^i\big( \mathcal O_{W_L}(-K_{W_L} - \partial W_L)\big) = 0$ for all $i<r-1$, which implies the nonnegativity.
Speyer showed that the validity of this nonnegativity implies a bound on the $f$-vectors of matroidal subdivisions \cite{Spe09}.
Over characteristic zero, he proved the nonnegativity via Kawamata--Viehweg vanishing.
\end{rem}

Corollary~\ref{cor:wndcpt} also implies that the cohomologies of the log canonical divisor on a wonderful compactification $W_L$ depends only on the matroid of $L$.
In tropical geometry, for an arbitrary matroid possibly with no realization, instead of the wonderful compactifcation we have its \emph{tropical linear space} \cite{Stu02, AK06, Spe08}, which serves as building blocks of \emph{tropical manifolds}.
With the theory of tropical vector bundles in its infancy, we ask:

\begin{ques}\label{ques:troplin}
Is there a theory of tropical line bundles and their sheaf cohomology on tropical manifolds such that it agrees with Corollary~\ref{cor:wndcpt}?
\end{ques}

Related discussions and questions can be found in Section~\ref{sec:misc}.

\subsection*{Previous works}
When the characteristic of $\kk$ is zero, so that tools like resolution of singularities and Kawamata--Viehweg vanishing are available, parts of the results here have been established in previous literature \cite{BF22, KT09}.  For instance, \cite[Theorem C]{BF22} states that any Schur functor applied to $\cS_L^\vee$ has vanishing higher cohomologies.  The vanishing higher cohomologies of the log canonical divisor of $W_L$ (Corollary~\ref{cor:wndcpt}) is also immediate from Kawamata--Viehweg vanishing when one notes that $\partial W_L$ is big and nef.
The proofs of these previous results crucially depend on characteristic zero methods.
The vanishing statements here are established over fields of arbitrary characteristic by elementary methods.

\subsection*{Organization}
In Section~\ref{sec:perm}, we review permutohedral varieties, and detail the behavior of the projection map $f: X_E \to X_{E\backslash n}$.
In Section~\ref{sec:proof}, after some preparatory computations on $\PP^1$, we prove Theorems~\ref{thm:extdelcont} and \ref{thm:symdelcont}.
In Section~\ref{sec:wndcpt}, we explain the application to wonderful compactifications.
In Section~\ref{sec:misc}, we collect some questions.

\subsection*{Acknowledgements}
The author thanks Andrew Berget, Alex Fink, Dhruv Ranganathan, and David Speyer for helpful conversations, and thanks Matt Larson for helpful conversations and comments on a preliminary draft of the paper.
The author also thanks the referees for helpful suggestions.
The author is supported by US National Science Foundation (DMS-2001854 and DMS-2246518).

\section{Permutohedral varieties}\label{sec:perm}

For a subset $S\subseteq E$, let $\be_S$ be the sum of standard basis vectors $\sum_{i\in S} \be_i \in \RR^E$, and let $\overline\be_S$ be its image in $\RR^E/\RR\be_E$.
For background and conventions for polyhedral geometry and toric geometry, we refer to \cite{Ful93, CLS11}.

\begin{defn}\label{defn:perm}
An \emph{ordered set partition} $\mathscr F$ of $E$ is a sequence $(F_1, \ldots, F_\ell)$ of nonempty subsets of $E$ that partition $E$.
The \emph{permutohedral fan} $\Sigma_E$ is the fan in $\RR^E/\RR\be_E$ consisting of cones
\[
\sigma_{\mathscr F} = \operatorname{cone}\{\overline\be_{F_1}, \overline\be_{F_1\cup F_1}, \ldots, \overline\be_{F_1 \cup \cdots \cup F_\ell}\}
\]
for each ordered set partition $\mathscr F = (F_1, \ldots, F_\ell)$ of $E$.
The \emph{permutohedral variety} $X_E$ is the (smooth projective) toric variety associated to the fan $\Sigma_E$, considered as a rational fan over $\ZZ^E/\ZZ\be_E$.
\end{defn}

We identify the cocharacter lattice of $T = (\kk^*)^E$ with $\ZZ^E$, which identifies the cocharacter lattice of $\PP T$ with $\ZZ^E/\ZZ\be_E$.  This identifies the dense open torus of $X_E$ with $\PP T$, and so we treat $X_E$ as a $T$-variety.
We refer to \cite[Chapter 3]{CLS11} for a background on torus-orbits of toric varieties, and fix the following notations.
For an ordered set partition $\mathscr F = (F_1, \cdots, F_{\ell})$ of $E$, denote by
\begin{itemize}
\item $p_{\mathscr F} = \lim_{t\to 0} \lambda(t)$ the limit point in $X_E$ where $\lambda: \kk^* \to \PP T$ is the one-parameter map of any cocharacter $\lambda\in \ZZ^E / \ZZ\be_E$ in the relative interior $\operatorname{relint}(\sigma_{\mathscr F})$ of $\sigma_{\mathscr F}$,
\item $O_{\mathscr F}$ the $T$-orbit corresponding to $\sigma_{\mathscr F}$, i.e.\ the orbit $T\cdot p_{\mathscr F}$, and
\item $Z_{\mathscr F}$ the closure of the $T$-orbit $O_{\mathscr F}$.
\end{itemize}

We now describe the map $f: X_E \to X_{E\setminus n}$.  First, note that the projection map $\overline f: \RR^E \to \RR^{E\setminus n}$ induces a map of fans $\Sigma_E \to \Sigma_{E\setminus n}$.
We record the following observation, whose verification is straightforward and is omitted.

\begin{prop}\label{prop:fibers}
Let $\mathscr F =(F_1, \dots, F_\ell)$ be an ordered set partition of $E\setminus n$.
The inverse image of the cone $\sigma_{\mathscr F} \in \Sigma_{E\setminus n}$ under the map $\Sigma_E \to \Sigma_{E\setminus n}$ consists of cones in $\Sigma_E$ corresponding to the following two kinds of ordered set partitions of $E$:
\begin{itemize}
\item For $1\leq i \leq \ell+1$, let $\mathscr F^i = (F_1, \dots, F_{i-1}, n, F_i, \dots, F_\ell)$.
\item For $1\leq i \leq \ell$, let $\mathscr F(i) = (F_1, \ldots, F_{i-1}, F_i \cup n, F_{i+1}, \dots, F_\ell)$.
\end{itemize}
Note that $\sigma_{\mathscr F^i} \cap \sigma_{\mathscr F^{i+1}} = \sigma_{\mathscr F(i)}$.
\end{prop}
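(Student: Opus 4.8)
The plan is to reduce everything to the standard combinatorial dictionary for the permutohedral fan, in which the level-set structure of a vector records which cone's relative interior contains it. First I would record the elementary fact underlying this: for an ordered set partition $\mathscr G = (G_1, \dots, G_m)$ of $E$, a point $\overline v \in \RR^E/\RR\be_E$ lies in $\operatorname{relint}(\sigma_{\mathscr G})$ if and only if, for some (equivalently any) lift $v = (v_e)_{e\in E}\in\RR^E$, the level sets of $e\mapsto v_e$ are precisely $G_1, \dots, G_m$, listed in order of strictly decreasing value. This is immediate from Definition~\ref{defn:perm}: a generic element of $\sigma_{\mathscr G}$ is $\sum_{k=1}^{m} c_k\, \overline\be_{G_1\cup\cdots\cup G_k}$ with all $c_k>0$ (the last summand being $c_m\overline\be_E = 0$), and reading off its coordinates shows that $v$ is constant on each $G_j$ with the common value strictly decreasing in $j$; the converse is the same computation run backwards. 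I would also make explicit at the outset what ``inverse image of $\sigma_{\mathscr F}$'' means here: the set of cones $\sigma\in\Sigma_E$ with $\overline f(\operatorname{relint}\sigma)\subseteq\operatorname{relint}(\sigma_{\mathscr F})$, equivalently, by the orbit--cone correspondence, the cones $\sigma_{\mathscr G}$ with $O_{\mathscr G}\subseteq f^{-1}(O_{\mathscr F})$; this is the form in which the statement is used in Section~\ref{sec:proof}.

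With the dictionary in place, the main assertion becomes purely combinatorial. The linear map $\RR^E/\RR\be_E\to\RR^{E\setminus n}/\RR\be_{E\setminus n}$ underlying $\Sigma_E\to\Sigma_{E\setminus n}$ is induced by deleting the $n$th coordinate, so if $\overline v\in\operatorname{relint}(\sigma_{\mathscr G})$ with $n\in G_i$, then the ordered level sets of $v|_{E\setminus n}$ are obtained from $\mathscr G$ by removing $n$ from $G_i$ and then discarding that block if it has become empty. Hence $\sigma_{\mathscr G}$ lies in the inverse image of $\sigma_{\mathscr F}$ precisely when this operation yields $\mathscr F$, and a two-case split finishes the argument: if $G_i = \{n\}$, the operation simply deletes the $i$th block, so $\mathscr G$ must be $\mathscr F^i$ for some $1\le i\le \ell+1$; and if $G_i\supsetneq\{n\}$, the operation replaces $G_i$ by $G_i\setminus n$, so $\mathscr G$ must be $\mathscr F(i)$ for some $1\le i\le \ell$. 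Conversely every $\mathscr F^i$ and every $\mathscr F(i)$ manifestly has the required property, so the inverse image is exactly the listed collection.

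For the identity $\sigma_{\mathscr F^i}\cap\sigma_{\mathscr F^{i+1}} = \sigma_{\mathscr F(i)}$ I would use the standard facts that the faces of $\sigma_{\mathscr G}$ correspond to the ordered set partitions obtained from $\mathscr G$ by merging consecutive blocks, and that the intersection of two cones of a fan is their largest common face, hence corresponds to the finest common such coarsening. Since $\mathscr F^i$ and $\mathscr F^{i+1}$ agree except for the order of the adjacent blocks $\{n\}$ and $F_i$, the finest ordered set partition coarsening both is the one that fuses exactly this pair, namely $\mathscr F(i)$; this gives the claim.

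I do not anticipate a real obstacle: once the level-set description of relative interiors is fixed, the remainder is bookkeeping. Two points do require care. First, one must be explicit about the meaning of ``inverse image of a cone'', so that faces whose relative interior is sent into a proper face of $\sigma_{\mathscr F}$ rather than into $\operatorname{relint}(\sigma_{\mathscr F})$ are correctly excluded. Second, one must keep the decreasing-value orientation convention consistent throughout, since reversing it would permute the indexing of the $\mathscr F^i$ and $\mathscr F(i)$ and make the stated formulas look wrong.
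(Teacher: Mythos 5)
Your argument is correct and is precisely the ``straightforward verification'' that the paper omits: the level-set description of $\operatorname{relint}(\sigma_{\mathscr G})$, the observation that deleting the $n$th coordinate removes $n$ from its block (discarding the block if it becomes empty), and the identification of $\sigma_{\mathscr F^i}\cap\sigma_{\mathscr F^{i+1}}$ as the face spanned by the common generators, i.e.\ the finest common coarsening $\mathscr F(i)$. Your care about interpreting ``inverse image of $\sigma_{\mathscr F}$'' via relative interiors is also the right reading, as it is what Corollary~\ref{cor:fibers} needs.
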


\begin{defn}\label{defn:delmap}
Let $f: X_E \to X_{E\setminus n}$ be the toric map associated to the map of fans $\Sigma_E \to \Sigma_{E\setminus n}$ induced by the projection map $\overline f: \RR^E \to \RR^{E\setminus n}$.
\end{defn}
 
Translating the polyhedral statement in Proposition~\ref{prop:fibers} to toric geometry gives the following.

\begin{cor}\label{cor:fibers}
The map $f: X_E \to X_{E\setminus n}$ is a flat and projective map whose fibers are chains of rational curves.  More specifically, for any $(t,1)\in T$ where $t\in (\kk^*)^{E\setminus n}$ and an ordered set partition $\mathscr F = (F_1,\dots, F_\ell)$ of $E\setminus n$, we have that the fiber
$
f^{-1}(t \cdot p_{\mathscr F}) = \bigcup_{i=1}^\ell C(t,i)
$
where
\begin{align*}
 C(t, i)  &= \{(t,1)\cdot p_{\mathscr F^i}\} \sqcup \{(t,1) \cdot p_{\mathscr F^{i+1}}\} \sqcup \{(t,t_n)\cdot p_{\mathscr F(i)} : t_n \in \kk^*\}\\
 &= \{0\} \sqcup \{\infty\} \sqcup \kk^* \simeq \PP^1.
\end{align*}
\end{cor}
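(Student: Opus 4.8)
The plan is to read off the fiber structure of $f$ from Proposition~\ref{prop:fibers} via the orbit--cone dictionary for toric morphisms, and then to obtain flatness from the resulting equidimensionality of the fibers. Properness and projectivity come for free: the fans $\Sigma_E$ and $\Sigma_{E\setminus n}$ are complete and $\overline f$ is surjective, so $f$ is proper, and since $X_E$ is projective and $X_{E\setminus n}$ is separated, $f$ is projective.

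To analyze the fibers, first note that $f$ is equivariant for the action of $T=(\kk^*)^E$, where $T$ acts on $X_{E\setminus n}$ through the projection $T\to(\kk^*)^{E\setminus n}$, and that every point of $X_{E\setminus n}$ is a $T$-translate of a distinguished point $p_{\mathscr F}$. Hence it suffices to describe $f^{-1}(p_{\mathscr F})$ for an ordered set partition $\mathscr F=(F_1,\dots,F_\ell)$ of $E\setminus n$ and then translate by an element of the form $(t,1)$. For a cone $\sigma\in\Sigma_E$, the toric morphism $f$ carries $O_\sigma$ onto $O_\tau$, where $\tau\in\Sigma_{E\setminus n}$ is the cone with $\overline f(\operatorname{relint}\sigma)\subseteq\operatorname{relint}\tau$; so $O_\sigma$ meets $f^{-1}(p_{\mathscr F})$ exactly when $\overline f(\operatorname{relint}\sigma)\subseteq\operatorname{relint}\sigma_{\mathscr F}$. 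Going through the cones listed in Proposition~\ref{prop:fibers} and comparing rays, one finds that this holds precisely for $\sigma=\sigma_{\mathscr F^i}$ $(1\le i\le\ell+1)$ and $\sigma=\sigma_{\mathscr F(i)}$ $(1\le i\le\ell)$.

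Now I would treat these two families. For $\sigma_{\mathscr F^i}$, the map that $\overline f$ induces on cocharacter lattices modulo the spans of the cones is an isomorphism: using smoothness of the permutohedral fan, $\overline f$ sends a lattice basis of $\ZZ\sigma_{\mathscr F^i}$ together with $\overline\be_n$ (which lies in $\ZZ\sigma_{\mathscr F^i}$) to a spanning set of $\ZZ\sigma_{\mathscr F}$ with a single repetition. Hence $f$ restricts to an isomorphism $O_{\sigma_{\mathscr F^i}}\xrightarrow{\sim}O_{\mathscr F}$ sending $p_{\mathscr F^i}$ to $p_{\mathscr F}$, so $O_{\sigma_{\mathscr F^i}}\cap f^{-1}(p_{\mathscr F})=\{p_{\mathscr F^i}\}$. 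For $\sigma_{\mathscr F(i)}$, one checks $\overline f^{-1}(\RR\sigma_{\mathscr F})=\RR\sigma_{\mathscr F(i)}+\RR\overline\be_n$ with $\overline\be_n\notin\RR\sigma_{\mathscr F(i)}$, so $f$ restricts to a map $O_{\sigma_{\mathscr F(i)}}\to O_{\mathscr F}$ whose fiber over $p_{\mathscr F}$ is the $\overline\be_n$-orbit through $p_{\mathscr F(i)}$, i.e.\ $\{(1,\dots,1,t_n)\cdot p_{\mathscr F(i)}:t_n\in\kk^*\}$. Taking closures, and using Proposition~\ref{prop:fibers} once more --- the only cones of $\Sigma_E$ strictly containing $\sigma_{\mathscr F(i)}$ whose orbits meet $f^{-1}(p_{\mathscr F})$ are $\sigma_{\mathscr F^i}$ and $\sigma_{\mathscr F^{i+1}}$, the two refinements of $\mathscr F(i)$ that split the block $F_i\cup n$ --- the orbit closure $Z_{\mathscr F(i)}$ meets $f^{-1}(p_{\mathscr F})$ in this $\kk^*$ together with the two points $p_{\mathscr F^i}$ and $p_{\mathscr F^{i+1}}$, hence in a $\PP^1$, which is $C(1,i)$. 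Assembling, $f^{-1}(p_{\mathscr F})=\bigcup_{i=1}^\ell C(1,i)$, with $C(1,i)\cap C(1,i+1)=\{p_{\mathscr F^{i+1}}\}$ and $C(1,i)\cap C(1,j)=\emptyset$ for $|i-j|\ge 2$, a chain of rational curves; applying $(t,1)\in T$ gives the stated description. In particular all fibers of $f$ are connected and of pure dimension $1$, so, $X_E$ being Cohen--Macaulay, $X_{E\setminus n}$ regular, and $\dim X_E-\dim X_{E\setminus n}=1$, miracle flatness shows $f$ is flat.

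The work is entirely in the middle step, and it is combinatorial rather than geometric: matching rays of the cones $\sigma_{\mathscr F^i}$ and $\sigma_{\mathscr F(i)}$ against those of $\sigma_{\mathscr F}$ to determine which orbits meet a fiber, checking that the ``horizontal'' orbits $O_{\sigma_{\mathscr F^i}}$ map isomorphically onto $O_{\mathscr F}$ (the one point at which smoothness of the permutohedral fan is used), and locating the two boundary points of each component $C(1,i)$; if one wants the fiber with its scheme structure, one further reads off on the affine chart $U_{\sigma_{\mathscr F^i}}\cong\kk^{n-1}$ that it is reduced with nodes at the $p_{\mathscr F^{i+1}}$. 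Everything else --- completeness of the fans, projectivity of $X_E$, and equidimensionality implying flatness --- is routine.
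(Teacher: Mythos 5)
Your proof is correct and is exactly the argument the paper intends: the paper simply asserts that Corollary~\ref{cor:fibers} follows by ``translating the polyhedral statement in Proposition~\ref{prop:fibers} to toric geometry,'' and your write-up carries out that translation in detail (orbit--cone dictionary to identify which orbits meet the fiber, smoothness of the cones $\sigma_{\mathscr F^i}$ to see the isomorphic ``horizontal'' orbits, the $\overline\be_n$-direction to get the $\kk^*$'s and their two limit points, and miracle flatness from equidimensionality). Nothing is missing.
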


One may also deduce the first statement of the corollary by noting that $X_E$ is the Losev-Manin space \cite{LM00}, which is a particular Hassett space of rational curves with weighted markings, and that the map $f$ is the universal curve map.

\smallskip
For proving Thereoms~\ref{thm:extdelcont} and \ref{thm:symdelcont}, Corollary~\ref{cor:fibers} primes us to use Grauert's theorem, which we recall here for convenience \cite[Corollary III.12.9]{Har77}:

If $\varphi: X \to Y$ is projective and $\mathcal F\in \operatorname{Coh}(X)$ is flat over $Y$ such that $\dim H^i(X_y, \mathcal F_y)$ is constant over the fibers $X_y = \varphi^{-1}(y)$ of $y\in Y$, then $R^i\varphi_*\mathcal F$ is a vector bundle on $Y$ whose fiber at $y\in Y$ is $H^i(X_y, \mathcal F_y)$.
Note that if $\varphi$ is itself flat, then any vector bundle $\mathcal E$ on $X$ is flat over $Y$.

\medskip
We conclude this section by discussing the behavior of the tautological bundles of $L$ on the fibers of the map $f$.
Let us write $L|S = L\backslash(E\backslash S)$ for a subset $S \subseteq E$.  For an ordered set partition $\mathscr F = (F_1, \ldots, F_\ell)$ of $E$, let $L_{\mathscr F}$ be the linear subspace
\[
 L_{\mathscr F}= L | F_1 \oplus L|(F_1\cup F_2)/F_1 \oplus \cdots \oplus L|(F_1 \cup \cdots \cup F_{\ell-1})/(F_1 \cup \cdots \cup F_{\ell-2}) \oplus L/(F_1 \cup \cdots \cup F_{\ell-1}) 
\]
of $\kk^{F_1} \oplus \kk^{F_2} \oplus \cdots \oplus \kk^{F_\ell} = \kk^E$.  Note that the dimension remains the same ($\dim L_{\mathscr F} = \dim L$), which one may verify by a straightforward induction.
We will also need the following fact, which follows from \cite[Proposition 3.6 and Proposition 5.3]{BEST23}.

\begin{lem}\label{lem:Lbndry}
The restriction $\cS_L|_{Z_{\mathscr F}}$ (resp.\ $\cQ_L|_{Z_{\mathscr F}}$) is the unique $T$-equivariant subbundle (resp.\ quotient bundle) of $(\mathcal O_{X_E}^{\oplus E})|_{Z_{\mathscr F}} = Z_{\mathscr F} \times {\kk}^E$ whose fiber over $p_{\mathscr F}$ is $L_{\mathscr F}$ (resp.\ $\kk^E/L_{\mathscr F}$).
\end{lem}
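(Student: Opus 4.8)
The plan is to reduce the assertion to one concrete computation --- a limit of linear subspaces in a Grassmannian --- after disposing of uniqueness by a soft argument. For \emph{uniqueness}: on any integral $T$-variety $Y$ carrying a point $q$ whose orbit $O = T\cdot q$ is dense, a $T$-equivariant subbundle $\mathcal E$ of $Y\times\kk^E$ (with the inverse $T$-action on $\kk^E$) is determined by the single subspace $\mathcal E_q\subseteq\kk^E$, because equivariance forces $\mathcal E_{t\cdot q} = t^{-1}\mathcal E_q$, which pins down $\mathcal E|_O$, and torsion-freeness of the quotient $(Y\times\kk^E)/\mathcal E$ together with integrality of $Y$ then forces $\mathcal E$ to be the unique subsheaf of $\mathcal O_Y^{\oplus E}$ restricting to $\mathcal E|_O$. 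The same applies to quotient bundles via their kernel subbundles. Applying this with $Y = Z_{\mathscr F}$ and $q = p_{\mathscr F}$ --- whose $T$-orbit $O_{\mathscr F}$ is dense in $Z_{\mathscr F}$ by construction --- and using that restricting the short exact sequence $0\to\cS_L\to\mathcal O_{X_E}^{\oplus E}\to\cQ_L\to 0$ of vector bundles to $Z_{\mathscr F}$ keeps it exact, the whole lemma reduces to the single identity $(\cS_L|_{Z_{\mathscr F}})_{p_{\mathscr F}} = L_{\mathscr F}$ inside $\kk^E$.

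To prove that identity, I would first recall from the construction of $\cS_L$ in \cite[Section 3.1]{BEST23} that $\cS_L$ is classified by a morphism $X_E\to\mathrm{Gr}(\dim L,\kk^E)$ extending $\overline t\mapsto[t^{-1}L]$. Choosing a cocharacter $\lambda\in\operatorname{relint}(\sigma_{\mathscr F})$, lifted to $\ZZ^E$ and normalized by a multiple of $\be_E$ so that $\lambda$ is constant --- say equal to $w_k$ --- on each block $F_k$ with $w_1 > w_2 > \cdots > w_\ell = 0$, and using $p_{\mathscr F} = \lim_{s\to 0}\lambda(s)$, we get
\[
(\cS_L)_{p_{\mathscr F}} \;=\; \lim_{s\to 0}\,\lambda(s)^{-1}L \qquad\text{in }\mathrm{Gr}(\dim L,\kk^E).
\]
Then I would compute this limit by induction on $\ell$. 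Decomposing $\kk^E = \kk^{F_1}\oplus\kk^{E\setminus F_1}$ and noting that the $F_1$-coordinates are scaled by $s^{-w_1}$, which dominates every other scaling as $s\to 0$ since $w_1$ is the largest weight, a rescaling of a basis of $L$ adapted to the exact sequence $0\to L/F_1\to L\to L|F_1\to 0$ shows that the limit contains the subspace $L|F_1\subseteq\kk^{F_1}$, while the summand $L/F_1\subseteq\kk^{E\setminus F_1}$ degenerates, under the residual weight, to $(L/F_1)_{(F_2,\dots,F_\ell)}$ by the inductive hypothesis; since these two limit subspaces sit in complementary coordinate subspaces and their dimensions sum to $\dim L$, the limit equals their direct sum. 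Unwinding this via the standard commutation of disjoint deletions and contractions --- e.g.\ $(L/F_1)|F_2 = L|(F_1\cup F_2)/F_1$ and $(L/F_1)/(F_2\cup\cdots\cup F_{\ell-1}) = L/(F_1\cup\cdots\cup F_{\ell-1})$ --- identifies the direct sum with $L_{\mathscr F}$.

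The one substantive ingredient is this degeneration computation; the delicate points are a careful rescaling argument for the limit in the Grassmannian, checking that the limit is independent of the chosen representative and scaling of $\lambda$, and verifying that the iterated minors produced by the induction reassemble verbatim into the definition of $L_{\mathscr F}$ --- all of these are routine once set up correctly. As an alternative to running this by hand, one may instead quote from \cite[Propositions 3.6 and 5.3]{BEST23} the product description $Z_{\mathscr F}\cong X_{F_1}\times\cdots\times X_{F_\ell}$ and the ensuing identification of $\cS_L|_{Z_{\mathscr F}}$ with the external direct sum of the tautological subbundles of the consecutive minors --- the route indicated in the statement --- but in either presentation the content is exactly the limit displayed above.
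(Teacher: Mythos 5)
Your proposal is correct, and it is worth noting that it actually supplies a proof where the paper gives only a citation: the paper's ``proof'' of this lemma is the single sentence stating that it follows from \cite[Propositions 3.6 and 5.3]{BEST23}, and your final paragraph correctly identifies this as the alternative route (via the splitting $Z_{\mathscr F}\cong X_{F_1}\times\cdots\times X_{F_\ell}$ and the identification of $\cS_L|_{Z_{\mathscr F}}$ with the exterior direct sum of the tautological bundles of the consecutive minors). Your self-contained argument is a sensible unpacking of that citation. The uniqueness reduction is sound --- though strictly speaking $\mathcal E$ is the unique \emph{subbundle with locally free quotient} restricting to $\mathcal E|_{O}$, not the unique subsheaf; there are plenty of coherent subsheaves of $\mathcal O_Y^{\oplus E}$ agreeing with $\mathcal E$ on the dense orbit. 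This is a harmless slip since the intended statement (separatedness of $\operatorname{Gr}$ plus $T$-equivariance on a dense orbit determines the classifying map) is what you actually use. The heart of the argument, the computation of $\lim_{s\to 0}\lambda(s)^{-1}L$ by peeling off the dominant block $F_1$, checks out: rescaling by $s^{w_1}$ recovers $L|F_1\oplus 0$, the kernel $L/F_1\oplus 0$ degenerates inductively under the residual cocharacter (which does lie in $\operatorname{relint}\sigma_{(F_2,\ldots,F_\ell)}$ after quotienting by the diagonal of $\kk^{E\setminus F_1}$), and the dimension count forces the limit to be the direct sum; the minor-commutation identities such as $(L/F_1)|F_2 = L|(F_1\cup F_2)/F_1$ then reassemble the pieces into $L_{\mathscr F}$. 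In short, what you gain over the paper is a self-contained and elementary argument; what you lose is brevity.
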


The tautological bundles of $L$ restricted to a fiber of $f$ are simple in the following sense.

\begin{prop}\label{prop:constant}
Let notations be as in Corollary~\ref{cor:fibers}.
As a subbundle (resp.\ quotient bundle) of the trivial bundle $\mathcal O^{\oplus E}$, the fibers of the restricted bundle $\cS_L|_{f^{-1}(t\cdot p_{\mathscr F})}$ (resp.\ $\cQ_L|_{f^{-1}(t \cdot p_{\mathscr F})}$) are constant along all $\PP^1$-components $C(t,i)$ if $n$ is a loop or a coloop of $L$, and non-constant at exactly one component if $n$ is neither a loop nor a coloop of $L$.
\end{prop}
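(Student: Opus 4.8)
The plan is to analyze the two bundles one $\PP^1$-component of the fiber at a time: I would reduce constancy along a component to a loop-or-coloop condition for a single matroid minor of $\M$, and then read off how that condition behaves along the chain of flats coming from $\mathscr F$.

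To treat one component $C(t,i)$, set $A_i = F_1\cup\cdots\cup F_{i-1}$ and $B_i = F_1\cup\cdots\cup F_i$. The open orbit of $C(t,i)$ is $\{(t,t_n)\cdot p_{\mathscr F(i)} : t_n\in\kk^\ast\}$, which lies in $O_{\mathscr F(i)}$, so by Lemma~\ref{lem:Lbndry} and the $T$-equivariant structure on $\cS_L\subseteq\mathcal O_{X_E}^{\oplus E}$ (which uses the inverse $T$-action), the fiber of $\cS_L$ over $(t,t_n)\cdot p_{\mathscr F(i)}$ is the subspace $(t,t_n)^{-1}L_{\mathscr F(i)}$ of $\kk^E$. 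Since $\cS_L|_{C(t,i)}$ is a subbundle of the trivial bundle, it is classified by a morphism from the smooth curve $C(t,i)$ to the Grassmannian of $r$-dimensional subspaces of $\kk^E$; as that Grassmannian is separated, this morphism, equivalently the subbundle itself, is constant if and only if it is constant on the dense open orbit, i.e.\ if and only if $(t,t_n)^{-1}L_{\mathscr F(i)}$ is independent of $t_n$. Among the direct summands of $L_{\mathscr F(i)}$ indexed by the parts of $\mathscr F(i)$, only the $i$-th summand $W := L|(B_i\cup n)/A_i$ involves the coordinate $n$, and $(1,\dots,1,t_n^{-1})$ rescales exactly that coordinate of $\kk^E$; hence the displayed subspace is independent of $t_n$ if and only if $W$ is invariant under rescaling the $n$-th coordinate, which, as one sees by decomposing an invariant subspace into weight spaces for the corresponding $\kk^\ast$-action, happens exactly when $n$ is a loop or a coloop of the matroid $(\M|(B_i\cup n))/A_i$ of $W$. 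The identical analysis applies to $\cQ_L$, whose fiber over $(t,t_n)\cdot p_{\mathscr F(i)}$ is $\kk^E/(t,t_n)^{-1}L_{\mathscr F(i)}$ and is governed by the same moving subspace.

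Next I would translate this into the closure operator of $\M$: the rank formula for a minor gives that $n$ is a loop of $(\M|(B_i\cup n))/A_i$ if and only if $n\in\operatorname{cl}_\M(A_i)$, and $n$ is a coloop of it if and only if $n\notin\operatorname{cl}_\M(B_i)$. Thus the restricted bundle is non-constant along $C(t,i)$ exactly when $n\notin\operatorname{cl}_\M(A_i)$ and $n\in\operatorname{cl}_\M(B_i)$. Writing $B_0:=\emptyset$ we have $A_i=B_{i-1}$ and a chain $\emptyset=B_0\subseteq B_1\subseteq\cdots\subseteq B_\ell=E\setminus n$ along which $\operatorname{cl}_\M$ is monotone. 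If $n$ is a loop of $\M$ then $n\in\operatorname{cl}_\M(\emptyset)\subseteq\operatorname{cl}_\M(A_i)$ for every $i$; if $n$ is a coloop of $\M$ then $\operatorname{rk}_\M(E\setminus n)=r-1$ gives $n\notin\operatorname{cl}_\M(E\setminus n)$, hence $n\notin\operatorname{cl}_\M(B_i)$ for every $i$; in either case no component is non-constant. If $n$ is neither a loop nor a coloop of $\M$, then $n\notin\operatorname{cl}_\M(B_0)$ while $n\in\operatorname{cl}_\M(B_\ell)$ (since now $\operatorname{rk}_\M(E\setminus n)=r$), so by monotonicity there is a unique index $i^\ast\in\{1,\dots,\ell\}$ with $n\notin\operatorname{cl}_\M(B_{i^\ast-1})$ and $n\in\operatorname{cl}_\M(B_{i^\ast})$, and then the bundle is non-constant along $C(t,i)$ precisely for $i=i^\ast$, that is, at exactly one component.

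I expect the main obstacle to be bookkeeping rather than anything conceptual: keeping track of the inverse $T$-action when reading off the fibers of $\cS_L$ over torus-translates of $p_{\mathscr F(i)}$ via Lemma~\ref{lem:Lbndry}, correctly singling out the unique summand of $L_{\mathscr F(i)}$ through which the coordinate $n$ enters together with the minor it realizes, and checking that constancy on the dense open orbit of $C(t,i)$ propagates to all of $C(t,i)$. Once these are settled, the rank computation identifying loops and coloops of the minor with closure membership in $\M$, together with the monotonicity of $\operatorname{cl}_\M$ along $B_0\subseteq\cdots\subseteq B_\ell$, finishes the proof uniformly for both $\cS_L$ and $\cQ_L$.
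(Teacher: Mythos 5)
Your argument is correct, and its first half coincides with the paper's: both proofs use Lemma~\ref{lem:Lbndry} to reduce constancy of $\cS_L$ (or $\cQ_L$) along the component $C(t,i)$ to the question of whether $n$ is a loop or a coloop of $L_{\mathscr F(i)}$, equivalently of the unique summand $L|(B_i\cup n)/A_i$ through which the coordinate $n$ enters. (Your explicit justification that constancy on the dense orbit propagates to the closed component, via separatedness of the Grassmannian, is a point the paper leaves implicit.) Where you diverge is in the combinatorial step showing that, when $n$ is neither a loop nor a coloop of $\M$, exactly one index $i$ yields a non-constant component. The paper invokes the greedy-algorithm characterization of the bases of $L_{\mathscr F'}$ as the $w_{\mathscr F'}$-maximal bases of $\M$, and compares the partitions $\mathscr F^i$, $\mathscr F(i)$, $\mathscr F^{i+1}$ to get monotonicity in $i$. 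You instead compute directly that $n$ is a loop of $(\M|(B_i\cup n))/A_i$ iff $n\in\operatorname{cl}_\M(B_{i-1})$ and a coloop iff $n\notin\operatorname{cl}_\M(B_i)$, so that non-constancy at $C(t,i)$ means $n\notin\operatorname{cl}_\M(B_{i-1})$ but $n\in\operatorname{cl}_\M(B_i)$; monotonicity of the closure along the chain $\emptyset=B_0\subseteq\cdots\subseteq B_\ell=E\setminus n$ then pins down a unique such $i$, and the loop and coloop cases of the proposition fall out of the endpoints $\operatorname{cl}_\M(\emptyset)$ and $\operatorname{cl}_\M(E\setminus n)$. Your route is more elementary (rank and closure only, no weightings or greedy structure) and handles all three cases uniformly; the paper's route makes visible the interleaving of the cones $\sigma_{\mathscr F^i}$ and $\sigma_{\mathscr F(i)}$ that underlies Corollary~\ref{cor:fibers}. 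Both are complete proofs.
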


\begin{proof}
If $n$ is a loop or a coloop of $L$, then $(t,t_n) \cdot L = (t,1) \cdot L$ for all $t_n \in \kk^*$, and for any $S\subseteq E\setminus n$, the element $n$ is again a loop or a coloop of $L\backslash S$ and $L/S$.
Thus, in this case $n$ is a loop or a coloop of $L_{\mathscr F(i)}$ for all $1\leq i\leq \ell$, and so Lemma~\ref{lem:Lbndry} and Corollary~\ref{cor:fibers} together imply that $\cS_L$ and $\cQ_L$ are constant along each component $C(t,i)$.
Suppose now that $n$ is neither a loop nor a coloop of $L$.  We need show that $n$ is neither a loop nor a coloop in $L_{\mathscr F(k)}$ for exactly one $1\leq k \leq \ell$.
For this end, we will use the following property of matroids that follows from their greedy algorithm structure (see \cite[\S1.8]{Oxl11}):

For an ordered partition $\mathscr F' = (F'_1, \dots, F'_\ell)$ of $E$, let $w_{\mathscr F'}: E \to \RR$ be any weighting such that $w_{\mathscr F'}$ is constant on each $F'_i$ and $w(f'_i) > w(f'_j)$ whenever $f'_i\in F'_i$ and $f'_j\in F'_j$ with $i<j$.  Then, the $w_{\mathscr F'}$-maximal bases of the matroid $\M$ of $L$ are the bases of the matroid of $L_{\mathscr F}$.

Now, note that if $n$ is neither a loop nor a coloop in $L_{\mathscr F(k)}$, then it is a coloop in $L_{\mathscr F^k}$ and a loop in $L_{\mathscr F^{k+1}}$ by construction.  In particular, every $w_{\mathscr F^k}$-maximal basis of $\M$ includes $n$, and every $w_{\mathscr F^{k+1}}$-maximal basis of $\M$ excludes $n$.  Thus, every $w_{\mathscr F(i)}$-maximal basis of $\M$ must include $n$ if $i<k$, and must exclude $n$ if $i>k$.
\end{proof}

\section{Proof of Theorems~\ref{thm:extdelcont} and \ref{thm:symdelcont}}\label{sec:proof}

Since $\cS_L$ or $\cQ_L$ along a fiber of the map $f$ is non-constant on at most one $\PP^1$-component of the fiber (Proposition~\ref{prop:constant}), we begin with preparatory observations for vector bundles on $\PP^1$.
Consider $\PP^1 = \{0\} \sqcup \{\infty\} \sqcup \kk^*$ as a $\kk^*$-toric variety, and let $\mathcal O_{\PP^1}^{\oplus E}$ be the trivial vector bundle $\PP^1 \times \kk^E$ where $\kk^*$ acts on $\kk^E$ by $t\cdot (x_1, \ldots, x_{n-1}, x_n) = (x_1, \ldots, x_{n-1}, t^{-1} x_n)$.
We write $\kk^{\{n\}}$ for the last coordinate of $\kk^E$ with the inverse standard action of $\kk^*$.

\begin{defn}\label{defn:tautP1}
For a subspace $L\subseteq \kk^E$, let $\cS_L'$ and $\cQ_L'$ be the $\kk^*$-equivariant sub and quotient bundles of $\mathcal O_{\PP^1}^{\oplus E}$, respectively, fitting into a short exact sequence $0\to \cS_L' \to \mathcal O_{\PP^1}^E \to \cQ_L' \to 0$ such that its fiber over the identity of $\kk^*$ is $0\to L \to \kk^E \to \kk^E/L \to 0$.
\end{defn}

Note that if $n$ is a loop or a coloop of $L$, so that $L = L' \oplus L|n \subseteq \kk^{E\setminus n} \oplus \kk^{\{n\}}$, we have that $\cS'_L \simeq \mathcal O_{\PP^1} \otimes (L' \oplus L|n)$ is a 
trivial bundle, and similarly for $\cQ'_L$.

\begin{lem}\label{lem:P1}
We have short exact sequences
\[
0\to \cS'_{L/n \oplus 0} \to \cS'_L \to \mathcal L_{\cS} \to 0 \quad\text{and}\quad 0\to \mathcal L_\cQ \to \cQ'_L \to \cQ'_{L\backslash n \oplus L|n} \to 0,
\]
where $\mathcal L_\cS$ and $\mathcal L_\cQ$ are $\kk^*$-equivariant line bundles (or zero) defined by
\[
\mathcal L_{\cS}  = \begin{matrix}
\text{the $\kk^*$-equivariant subbundle of}\\
\text{$\mathcal O_{\PP^1} \otimes \big( (\kk^{E\setminus n} / (L/n)) \oplus \kk^{\{n\}}\big)$}\\
\quad\text{whose fiber at the identity is $L/ (L/n \oplus 0)$}\end{matrix}
\simeq  \begin{cases}
0 & \text{if $n$ a loop}\\
\mathcal O_{\PP^1}\otimes \kk^{\{n\}} & \text{if $n$ a coloop}\\
\mathcal O_{\PP^1}(-1) & \text{if $n$ neither}
\end{cases}
\]
and
\[
\mathcal L_{\cQ}  = \begin{matrix}
\text{the $\kk^*$-equivariant quotient bundle of}\\
\text{$\mathcal O_{\PP^1} \otimes  \big( (L\backslash n)/(L/n) \oplus L|n \big) $}\\
\quad\text{whose fiber at the identity is the quotient by $L/(L/n \oplus 0)$}\end{matrix}
\simeq  \begin{cases}
0 & \text{if $n$ a loop}\\
0 & \text{if $n$ a coloop}\\
\mathcal O_{\PP^1}(1) & \text{if $n$ neither}.
\end{cases}
\]
\end{lem}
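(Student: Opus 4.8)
The plan is to work entirely on the toric variety $\PP^1$ and exploit the fact that every $\kk^*$-equivariant vector bundle on $\PP^1$ splits as a sum of equivariant line bundles $\mathcal O_{\PP^1}(d)$ twisted by characters, so that the bundles $\cS'_L$, $\cQ'_L$ and their relatives are determined by combinatorial ``filtration'' data at the two torus-fixed points $0$ and $\infty$. Concretely, I would first recall (or record) that for a $\kk^*$-equivariant subbundle $\mathcal E \subseteq \mathcal O_{\PP^1}^{\oplus E}$ whose fiber at the identity is a subspace $V\subseteq \kk^E$, the fibers over $0$ and $\infty$ are the two limit subspaces $\lim_{t\to 0} t\cdot V$ and $\lim_{t\to\infty} t\cdot V$ in the appropriate Grassmannian, and the splitting type of $\mathcal E$ is read off from how the Harder--Narasimhan-type filtration of $\kk^E$ (by weights of the $\kk^*$-action) interacts with $V$. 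Since here $\kk^*$ acts with weight $0$ on $\kk^{E\setminus n}$ and weight $-1$ on $\kk^{\{n\}}$ (under $\cdot_{\mathrm{inv}}$), the only nontrivial summand can be an $\mathcal O_{\PP^1}(\pm 1)$, which already explains the trichotomy in the statement.

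The key steps, in order, are: \textbf{(1)} Identify the subbundle $\cS'_{L/n\oplus 0}\subseteq \mathcal O_{\PP^1}^{\oplus E}$ as a $\kk^*$-equivariant subbundle of $\cS'_L$: its fiber at the identity is $L/n\oplus 0\subseteq L$, and since $L/n\oplus 0$ lies in the weight-$0$ part $\kk^{E\setminus n}$ it is stable under the $\kk^*$-action, hence $\cS'_{L/n\oplus 0}$ is literally the trivial subbundle $\mathcal O_{\PP^1}\otimes(L/n\oplus 0)$ and the inclusion into $\cS'_L$ is clear fiberwise over the dense torus, hence everywhere by well-definedness of equivariant bundles. \textbf{(2)} Define $\mathcal L_\cS := \cS'_L / \cS'_{L/n\oplus 0}$ and compute it: the quotient lands in $\mathcal O_{\PP^1}^{\oplus E}/(\mathcal O_{\PP^1}\otimes(L/n\oplus 0)) = \mathcal O_{\PP^1}\otimes\big((\kk^{E\setminus n}/(L/n))\oplus\kk^{\{n\}}\big)$, its fiber at identity is the line (or $0$) $L/(L/n\oplus 0)$, and its rank is $\dim L - \dim L/n$, which is $0$ if $n$ is a loop and $1$ otherwise. \textbf{(3)} Pin down the splitting type of $\mathcal L_\cS$ when it is a line bundle by reading off its two limit points: when $n$ is a coloop, $L = L/n\oplus\kk^{\{n\}}$ so $L/(L/n\oplus 0) = \kk^{\{n\}}$ is $\kk^*$-stable of weight $-1$, giving the trivial bundle $\mathcal O_{\PP^1}\otimes\kk^{\{n\}}$; when $n$ is neither a loop nor coloop, the line $L/(L/n\oplus 0)$ projects isomorphically to $\kk^{\{n\}}$ but also has nonzero component in $\kk^{E\setminus n}$, so as $t\to 0$ the line limits into $\kk^{\{n\}}$ while as $t\to\infty$ it limits into $\kk^{E\setminus n}/(L/n)$ — the ``jump'' between the two fixed points forces degree $-1$, i.e. $\mathcal O_{\PP^1}(-1)$. \textbf{(4)} Dualize/transpose the entire argument for the quotient side: $\cQ'_{L\backslash n\oplus L|n}$ is a quotient of $\cQ'_L$ (equivalently, $\cS'_L\subseteq \cS'_{L\backslash n\oplus L|n}$ as subbundles, via $L\subseteq L\backslash n\oplus L|n$ when we identify things correctly — here one uses that $L\backslash n$ contains the image of $L$), and $\mathcal L_\cQ$ is the kernel, a line bundle sitting inside $\mathcal O_{\PP^1}\otimes\big((L\backslash n)/(L/n)\oplus L|n\big)$ with the analogous limit computation yielding $\mathcal O_{\PP^1}(1)$ in the generic case.

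The main obstacle I anticipate is step \textbf{(3)}/\textbf{(4)}: one must be careful that the limit subspaces at $0$ and $\infty$ are computed in the right Grassmannians and that the sign of the degree comes out correctly — the orientation of the $\kk^*$-action (weight $-1$ on $\kk^{\{n\}}$ under $\cdot_{\mathrm{inv}}$) is exactly what makes $\mathcal L_\cS$ have degree $-1$ rather than $+1$, and conversely $\mathcal L_\cQ$ degree $+1$. A clean way to make this rigorous without hand-waving about limits is to exhibit explicit equivariant trivializations on the two standard affine charts $\kk = \{\infty\neq\cdot\}$ and $\kk = \{0\neq\cdot\}$ of $\PP^1$: pick a generator $v\in L$ with nonzero $\kk^{\{n\}}$-component, write $v = v_0 + c\,\be_n$ with $v_0\in\kk^{E\setminus n}$ and $c\in\kk^*$, observe $t\cdot v = v_0 + c t^{-1}\be_n$, and check directly that $v$ (resp. $t\cdot v$, suitably rescaled) extends the line over $0$ (resp. over $\infty$) in the quotient $\mathcal O_{\PP^1}\otimes((\kk^{E\setminus n}/(L/n))\oplus\kk^{\{n\}})$, with transition function $t^{-1}$ on the overlap — which is precisely the cocycle of $\mathcal O_{\PP^1}(-1)$ with its standard equivariant structure. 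The coloop and loop cases are then the degenerate specializations $v_0\in L/n$ (so $v$ is already $\kk^*$-stable mod $L/n$, giving the trivial twist) and $c = 0$ (so the quotient line is zero), respectively. The remaining content is bookkeeping, which I would not spell out in full.
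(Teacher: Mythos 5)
Your proposal is correct and follows essentially the same route as the paper: quotient $\cS'_L$ by the constant subbundle $\mathcal O_{\PP^1}\otimes(L/n\oplus 0)$, observe that the resulting line bundle sits inside the rank-two trivial bundle on $(L\backslash n)/(L/n)\oplus L|n$ and is non-constant exactly when $n$ is neither a loop nor a coloop (hence $\mathcal O_{\PP^1}(-1)$), and mirror the argument for $\cQ'_L$. The only differences are presentational — you pin down the degree by an explicit equivariant cocycle where the paper identifies $\mathcal L_\cS$ with the pullback of the tautological subbundle of $\PP\big((L\backslash n)/(L/n)\oplus L|n\big)$ and invokes the Euler sequence, and you obtain the $\cQ$-sequence from the inclusion $\cS'_L\subseteq\cS'_{L\backslash n\oplus L|n}$ rather than from the paper's second snake-lemma diagram, which amounts to the same diagram chase.
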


\begin{proof}
If $n$ is a loop or a coloop, so that $\cS'_L$ and $\cQ'_L$ are trivial bundles, the statements of the lemma are immediate.  Let us now assume that $n$ is neither a loop nor a coloop.

For all $t\in \kk^* \subset \PP^1$, we have $L/n \oplus 0 = (t\cdot L) \cap (\kk^{E\setminus n} \oplus 0) \hookrightarrow t\cdot L$, and at the boundaries we have $\lim_{t\to 0} t\cdot L = L/ n \oplus 0$ and $\lim_{t\to \infty}t\cdot L = L\backslash n \oplus 0$.
Since $L/n \subseteq L\backslash n$, we thus have an injective map of vector bundles $\cS'_{L\oplus 0} \hookrightarrow \cS'_L$.
We hence obtain the following diagram of commuting short exact sequences
\[
\begin{tikzcd}
 & 0 \ar[d] & 0 \ar[d] &  &\\
 0 \ar[r] & \cS'_{L/n \oplus 0} \ar[r,equal]\ar[d]& \mathcal O_{\PP^1} \otimes (L/n \oplus 0) \ar[r] \ar[d]& 0 \ar[d]& \\
 0 \ar[r] & \cS'_L \ar[r] \ar[d]& \mathcal O_{\PP^1} \otimes \kk^E \ar[r]\ar[d] & \cQ'_L \ar[r]\ar[d,equal] & 0\\
0 \ar[r] & \mathcal L_\cS \ar[r] \ar[d]& \mathcal O_{\PP^1} \otimes \big(\kk^{E\setminus n}/(L/n) \oplus \kk^{\{n\}}\big) \ar[r]\ar[d] &\cQ'_L \ar[r]\ar[d] & 0\\
 & 0 & 0 & 0 &
\end{tikzcd}
\]
by starting with the first two rows and then applying the snake lemma.
We have the desired short exact sequence for $\cS'_L$.

Now, over the identity point of $\kk^* \subset \PP^1$, the $\kk^*$-equivariant embedding $\mathcal L_\cS \hookrightarrow \mathcal O_{\PP^1} \otimes \big( (\kk^{E\setminus n} / (L/n)) \oplus \kk^{\{n\}}\big)$ is  $L/(L/n\oplus 0) \hookrightarrow (\kk^{E\setminus n} / (L/n)) \oplus \kk^{\{n\}}$.
Because $(L\backslash n)/(L/n) \oplus L|n$ is the direct sum of the projections of $L/(L/n \oplus 0)$ to the two direct summands of $(\kk^{E\setminus n} / (L/n)) \oplus \kk^{\{n\}}$, we have that 
$\mathcal L_\cS$ in fact embeds in $\mathcal O_{\PP^1} \otimes \big( (L\backslash n)/(L/n) \oplus L|n \big)$.
In other words, $\mathcal L_\cS$ is the pullback of the tautological subbundle of $\PP^1 \simeq \PP\big((L\backslash n)/(L/n) \oplus L|n\big)$ where the isomorphism $\PP^1 \overset\sim\to \PP\big((L\backslash n)/(L/n) \oplus L|n\big)$ is defined by
\[
\kk^*\ni t \mapsto \text{the image in $(L\backslash n)/(L/n) \oplus L|n$ of }t\cdot L / (L/n \oplus 0).
\]
The Euler sequence $0\to \mathcal O_{\PP^1}(-1) \to \mathcal O_{\PP^1}^2 \to \mathcal O_{\PP^1}(1) \to 0$ on $\PP^1$ then becomes
\[
0 \to \mathcal L_\cS \to \mathcal O_{\PP^1} \otimes \big( (L\backslash n)/(L/n) \oplus L|n \big) \to \mathcal L_\cQ \to 0,
\]
which defines the line bundle $\mathcal L_\cQ$, and proves the statements about the isomorphism types of $\mathcal L_\cS$ and $\mathcal L_\cQ$.

Lastly, we have the following commuting diagram of short exact sequences
\[
\begin{tikzcd}
 & 0 \ar[d] & 0 \ar[d] & 0\ar[d] &\\
 0 \ar[r] & \mathcal L_\cS \ar[r]\ar[d,equal]& \mathcal O_{\PP^1} \otimes \big( (L\backslash n)/(L/n) \oplus L|n\big) \ar[r] \ar[d]& \mathcal L_\cQ \ar[d]\ar[r]&0 \\
 0 \ar[r] & \mathcal L_\cS \ar[r] \ar[d]& \mathcal O_{\PP^1} \otimes \big(\kk^{E\setminus n}/(L/n) \oplus \kk^{\{n\}}\big)\ar[r]\ar[d] & \cQ'_L \ar[r]\ar[d] & 0\\
 & 0 \ar[r]& \mathcal O_{\PP^1} \otimes \big(\kk^{E\setminus n}/(L\backslash n) \oplus \kk^{\{n\}}/(L|n) \big) \ar[r,equal] \ar[d]&\cQ'_{L\backslash n \oplus L|n}\ar[r]\ar[d] & 0\\
 &  & 0 & 0 &
\end{tikzcd}
\]
by starting with the first two columns and then applying the snake lemma.  The desired short exact sequence for $\cQ'_L$ follows.
\end{proof}

\begin{rem}
The two short exact sequences in \Cref{lem:P1} split:  For the first sequence, it follows from the  possible isomorphism types of $\mathcal L_\cS$ that $\operatorname{Ext}^1_{\PP^1}(\mathcal L_\cS, \cS'_{L/n \oplus 0}) \simeq H^1(\PP^1, \mathcal L_\cS^\vee \otimes \cS'_{L/n \oplus 0}) = 0$, and similarly for the second sequence.
\end{rem}

The lemma implies the following about the cohomologies of exterior powers of $\cS'_L$ and $\cQ'_L$.

\begin{prop}\label{prop:wedgeH0}
For all $p\geq 0$, we have $H^1(\bigwedge^p \cS'_L) = 0$ and $H^1(\bigwedge^p \cQ'_L) = 0$, and we have natural isomorphisms
\begin{align*}
H^0(\bwedge^p \cS'_L) &\simeq \begin{cases}
\bwedge^p (L/n \oplus \kk) & \text{if $n$ a coloop},\\
\bwedge^p(L/n) & \text{if $n$ not a coloop},
\quad\qquad\text{and}
\end{cases}\\
H^0(\bwedge^p \cQ'_L) &\simeq \begin{cases}
\bwedge^p\big(\kk^{E}/(L/n \oplus 0)\big) & \text{if $n$ a loop}\\
\bwedge^p \big( \kk^E / (L/n \oplus \kk)\big) & \text{if $n$ a coloop}\\
\bwedge^p \big( \kk^{E\setminus n}/(L/n) \big) \oplus \bwedge^{p-1} \big(\kk^{E\setminus n}/(L\backslash n)\big) &\text{if $n$ neither}.
\end{cases}
\end{align*}
\end{prop}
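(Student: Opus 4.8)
The plan is to push the two short exact sequences of \Cref{lem:P1} through the standard filtration on exterior powers and read off cohomology on $\PP^1$. Recall that a short exact sequence $0\to A\to B\to C\to 0$ of locally free sheaves induces a functorial filtration $0=\Gamma^{p+1}\subseteq\Gamma^p\subseteq\cdots\subseteq\Gamma^0=\bwedge^p B$ with $\Gamma^i/\Gamma^{i+1}\simeq \bwedge^i A\otimes \bwedge^{p-i}C$, where $\Gamma^i$ is the image of $\bwedge^i A\otimes\bwedge^{p-i}B\to\bwedge^p B$. The point is that in both sequences of \Cref{lem:P1} one outer term is a trivial bundle --- $\cS'_{L/n\oplus 0}\simeq\mathcal O_{\PP^1}\otimes (L/n)$ and $\cQ'_{L\backslash n\oplus L|n}\simeq\mathcal O_{\PP^1}\otimes\bigl(\kk^E/(L\backslash n\oplus L|n)\bigr)$ by the remark following \Cref{defn:tautP1} --- while the other outer term, $\mathcal L_\cS$ or $\mathcal L_\cQ$, has rank at most $1$; hence at most two graded pieces of the filtration are nonzero, and the whole computation reduces to the cohomology of $\mathcal O_{\PP^1}(d)$ for $d\in\{-1,0,1\}$.

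If $n$ is a loop or a coloop, then $\mathcal L_\cS=\mathcal L_\cQ=0$, so $\cS'_L$ and $\cQ'_L$ are themselves the trivial bundles above; unwinding $L/n$, $L\backslash n$, $L|n$ in these two cases shows $\cS'_L\simeq\mathcal O_{\PP^1}\otimes(L/n)$ or $\mathcal O_{\PP^1}\otimes(L/n\oplus\kk)$ and $\cQ'_L\simeq \mathcal O_{\PP^1}\otimes\bigl(\kk^E/(L/n\oplus 0)\bigr)$ or $\mathcal O_{\PP^1}\otimes\bigl(\kk^E/(L/n\oplus\kk)\bigr)$, and the claim is immediate from $H^0(\mathcal O_{\PP^1})=\kk$, $H^1(\mathcal O_{\PP^1})=0$. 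So the content is the case where $n$ is neither a loop nor a coloop, in which $\mathcal L_\cS\simeq\mathcal O_{\PP^1}(-1)$ and $\mathcal L_\cQ\simeq\mathcal O_{\PP^1}(1)$.

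In this case the filtration for $\bwedge^p\cS'_L$ collapses to the short exact sequence
\[
0\to \mathcal O_{\PP^1}\otimes\bwedge^p(L/n)\to \bwedge^p\cS'_L\to \mathcal O_{\PP^1}(-1)\otimes\bwedge^{p-1}(L/n)\to 0,
\]
and $H^0(\mathcal O_{\PP^1}(-1))=H^1(\mathcal O_{\PP^1}(-1))=0$ gives at once $H^1(\bwedge^p\cS'_L)=0$ and $H^0(\bwedge^p\cS'_L)\simeq\bwedge^p(L/n)$. For $\bwedge^p\cQ'_L$ it collapses to
\[
0\to \mathcal L_\cQ\otimes\bwedge^{p-1}\bigl(\kk^{E\setminus n}/(L\backslash n)\bigr)\to \bwedge^p\cQ'_L\to \mathcal O_{\PP^1}\otimes\bwedge^p\bigl(\kk^{E\setminus n}/(L\backslash n)\bigr)\to 0,
\]
using $\kk^E/(L\backslash n\oplus L|n)\simeq \kk^{E\setminus n}/(L\backslash n)$ since $L|n=\kk^{\{n\}}$; as $H^1(\mathcal L_\cQ)=H^1(\mathcal O_{\PP^1}(1))=0$ we get $H^1(\bwedge^p\cQ'_L)=0$ and a short exact sequence
\[
0\to H^0(\mathcal L_\cQ)\otimes\bwedge^{p-1}\bigl(\kk^{E\setminus n}/(L\backslash n)\bigr)\to H^0(\bwedge^p\cQ'_L)\to \bwedge^p\bigl(\kk^{E\setminus n}/(L\backslash n)\bigr)\to 0.
\]
Taking $H^0$ of the Euler-type sequence $0\to\mathcal L_\cS\to\mathcal O_{\PP^1}\otimes\bigl((L\backslash n)/(L/n)\oplus L|n\bigr)\to\mathcal L_\cQ\to 0$ (with $\mathcal L_\cS\simeq\mathcal O_{\PP^1}(-1)$ acyclic) identifies $H^0(\mathcal L_\cQ)\simeq \bigl((L\backslash n)/(L/n)\bigr)\oplus L|n$; since $L|n$ is one-dimensional, substituting this and comparing with the exterior-power filtration of $0\to (L\backslash n)/(L/n)\to \kk^{E\setminus n}/(L/n)\to \kk^{E\setminus n}/(L\backslash n)\to 0$ rewrites the last display as the asserted $H^0(\bwedge^p\cQ'_L)\simeq \bwedge^p\bigl(\kk^{E\setminus n}/(L/n)\bigr)\oplus\bwedge^{p-1}\bigl(\kk^{E\setminus n}/(L\backslash n)\bigr)$.

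The cohomological input is thus entirely elementary; I expect the main obstacle to be bookkeeping --- carrying the several natural identifications (the two exterior-power filtrations, the identification of $H^0(\mathcal L_\cQ)$, and the case-by-case unwinding of $L/n$, $L\backslash n$, $L|n$) through consistently so that the final isomorphisms come out in exactly the stated form, and tracking the $\kk^*$-equivariant structures if one wants the isomorphisms to respect them.
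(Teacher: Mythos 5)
Your proposal is correct and follows essentially the same route as the paper: both use the two short exact sequences of \Cref{lem:P1}, observe that the exterior-power filtration collapses to a two-term sequence because $\mathcal L_\cS$ and $\mathcal L_\cQ$ have rank at most one while the other outer term is trivial, dispose of the loop/coloop cases by triviality, and in the remaining case identify $H^0(\mathcal L_\cQ)$ with $(L\backslash n)/(L/n)\oplus L|n$ via the Euler sequence before matching the resulting $H^0$-sequence with the exterior-power filtration of $0\to (L\backslash n)/(L/n)\to \kk^{E\setminus n}/(L/n)\to \kk^{E\setminus n}/(L\backslash n)\to 0$. The bookkeeping you flag at the end is exactly the content of the paper's final paragraph, and your treatment of it is sound.
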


\begin{proof}
By standard multilinear algebra (e.g.\ \cite[Exercise II.5.16]{Har77}), applying exterior powers to the short exact sequences of \Cref{lem:P1} yields short exact sequences
\begin{equation}\tag{$\dagger$}
\begin{split}
0 \to \bwedge^p \cS'_{L/n \oplus 0} \to &\bwedge^p \cS'_L \to \bwedge^{p-1} \cS'_{L/n\oplus 0} \otimes \mathcal L_\cS \to 0 \qquad\text{and}\\
\label{eqn:wedgeSESQ}
0\to \mathcal L_\cQ \otimes \bwedge^{p-1} \cQ'_{L\backslash n \oplus L|n} \to &\bwedge^p \cQ'_L \to \bwedge^p \cQ'_{L\backslash n \oplus L|n} \to 0
\end{split}
\end{equation}
for all $p \geq 0$.
In the resulting long exact sequences of cohomologies, we have $H^1(\bwedge^p \cS'_L)=H^1(\bwedge^p \cQ'_L) = 0$ because of the descriptions of $\mathcal L_\cS$ and $\mathcal L_\cQ$ in \Cref{lem:P1} and $H^1(\mathcal O_{\PP^1}(-1)) = H^1(\mathcal O_{\PP^1}) = H^1(\mathcal O_{\PP^1}(1)) = 0$, keeping in mind that $\cS'_{L/n \oplus 0}$ and $\cQ'_{L\backslash n \oplus L|n}$ are trivial bundles.
As $H^1$'s vanish, note that applying $H^0$ yields short exact sequences of vector spaces.

We now treat the statements about $H^0$.  When $n$ is a loop or a coloop, the desired follows since all vector bundles involved are trivial in such case.  So, assume now that $n$ is neither a loop nor a coloop.
The statement for $H^0(\bwedge^p \cS'_L)$ follows since $H^0(\mathcal O_{\PP^1}(-1)) = 0$.
For $H^0(\bwedge^p \cQ'_L)$, note first that $H^0(\mathcal L) = V$ for any $V \simeq \kk^2$ and $\mathcal L \simeq \mathcal O_{\PP^1}(1)$ such that
\[
0 \to \mathcal O_{\PP^1}(-1) \to  \mathcal O_{\PP^1} \otimes V \to \mathcal L \to 0.
\]
Applying this with $V = (L\backslash n)/(L/n) \oplus L|n$ and $\mathcal L = \mathcal L_\cQ$, we obtain that the short exact sequence from applying $H^0$ to the second sequence in \eqref{eqn:wedgeSESQ} with $p = 1$ is naturally isomorphic to
\[
0\to (L\backslash n)/(L/n) \oplus L|n \to \kk^{E\setminus n}/(L/n) \oplus \kk^{\{n\}} \to \kk^{E\setminus n}/(L\backslash n) \oplus \kk^{\{n\}}/(L|n) \to 0,
\]
(i.e.\ the middle column of the second diagram in the proof of \Cref{lem:P1}) which is the direct sum of two sequences
\[
0\to (L\backslash n)/(L/n) \to \kk^{E\setminus n}/(L/n) \to \kk^{E\setminus n}/(L\backslash n) \to 0 \quad\text{and}\quad 0\to \kk \to \kk \to 0 \to 0.
\]
In general, applying $H^0$ for $p\geq 1$ yields the short exact sequence which is the direct sum of
\begin{align*}
0 \to (L\backslash n)/(L/n) \otimes \bwedge^{p-1}\kk^{E\setminus n}/(L\backslash n) \to &\bwedge^p \kk^{E\setminus n}/(L/n) \to \bwedge^p\kk^{E\setminus n}/(L\backslash n) \to 0 \quad\text{and}\\
0 \to \kk \otimes \bwedge^{p-1}\kk^{E\setminus n}/(L\backslash n) \to &\bwedge^{p-1} \kk^{E\setminus n}/(L\backslash n) \to 0 \to 0.
\end{align*}
The desired statement for $H^0(\bwedge^p\cQ'_L)$ follows.
\end{proof}

We also use \Cref{lem:P1} to deduce the following symmetric powers analogue of \Cref{prop:wedgeH0}.  Note that for any $V \simeq \kk^2$ and $\mathcal L \simeq \mathcal O_{\PP^1}(1)$ fitting into $
0 \to \mathcal O_{\PP^1}(-1) \to  \mathcal O_{\PP^1} \otimes V \to \mathcal L \to 0$, we have natural isomorphisms
\[
H^0(\mathcal L^{\otimes p}) \simeq \operatorname{Sym}^p V \quad\text{and}\quad H^1(\mathcal O_{\PP^1}(-p-2)) \simeq \det V^\vee \otimes \operatorname{Sym}^p V^\vee \quad\text{for all $p\geq 0$}.
\]

\begin{prop}\label{prop:symH}
For all $p \geq 0$, we have a natural isomorphism
\[
H^0(\operatorname{Sym}^p \mathcal S'_L) \simeq
\begin{cases}
\operatorname{Sym}^p(L/n \oplus \kk) & \text{if $n$ a coloop}\\
\operatorname{Sym}^p(L/n) &\text{if $n$ not a coloop}.
\end{cases}
\]
When $n$ is a loop or a coloop, we have $H^1(\operatorname{Sym}^p \cS'_L) = 0$, and when $n$ is neither, we have a filtration $H^1(\operatorname{Sym}^p \cS'_L) = F_0 \supseteq F_1 \supseteq \cdots\supseteq F_{p-2} \supseteq F_{p-1} = 0$ such that 
\[
F_i/F_{i+1} \simeq \big(\det((L\backslash n)/(L/n) \oplus L|n)^\vee\big)^{\otimes p-1-i} \otimes \operatorname{Sym}^{p-2-i}((L\backslash n)/(L/n) \oplus L|n) \otimes \operatorname{Sym}^i(L/n \oplus 0)
\]
for all $0\leq i \leq p-2$.  Similarly, for all $p\geq 0$, we have $H^1(\operatorname{Sym}^p \cQ'_L) = 0$, and we have a natural isomorphism
\[
H^0(\operatorname{Sym}^p\cQ'_L) \simeq
\begin{cases}
\operatorname{Sym}^p \big(\kk^{E\setminus n}/(L/n)\big) &\text{if $n$ a coloop}\\
\operatorname{Sym}^p \big(\kk^{E\setminus n}/(L/n)\oplus \kk\big) & \text{if $n$ not a coloop}.
\end{cases}
\]
\end{prop}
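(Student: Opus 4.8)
The plan is to feed the two short exact sequences of \Cref{lem:P1} into the symmetric‑power functor — mirroring how \Cref{prop:wedgeH0} handled exterior powers — and then read off cohomology on $\PP^1$, using the natural identifications recorded just above the proposition. The case where $n$ is a loop or a coloop is immediate: then $\cS'_L$ and $\cQ'_L$ are trivial bundles, so all of their symmetric powers are trivial, giving $H^1=0$ and $H^0$ equal to the full symmetric power of the fiber over the identity of $\kk^*$, i.e.\ $\operatorname{Sym}^p(L/n\oplus L|n)$ for $\cS'_L$ and $\operatorname{Sym}^p\big(\kk^{E\setminus n}/(L/n)\oplus \kk^{\{n\}}/(L|n)\big)$ for $\cQ'_L$; recalling $L|n\in\{0,\kk\}$ in these cases yields the stated formulas. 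So assume henceforth that $n$ is neither a loop nor a coloop.

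For $\cS'_L$, I apply $\operatorname{Sym}^p$ to $0\to \cS'_{L/n\oplus 0}\to \cS'_L\to \mathcal L_\cS\to 0$. By the standard filtration attached to symmetric powers of an extension (cf.\ \cite[Exercise II.5.16]{Har77}), together with the facts that $\cS'_{L/n\oplus 0}$ is trivial and $\mathcal L_\cS\simeq\mathcal O_{\PP^1}(-1)$, the bundle $\operatorname{Sym}^p\cS'_L$ carries a filtration with graded pieces $\operatorname{Sym}^k(L/n)\otimes\mathcal O_{\PP^1}(-(p-k))$ for $0\le k\le p$. On $\PP^1$ the only piece with nonzero $H^0$ is $k=p$, and the pieces with nonzero $H^1$ are exactly those with $p-k\ge 2$; chasing the long exact sequences up the filtration — every lower graded piece has $H^0=0$, so it never obstructs, and $H^2(\PP^1,-)=0$ — yields $H^0(\operatorname{Sym}^p\cS'_L)\simeq\operatorname{Sym}^p(L/n)$ together with a filtration of $H^1(\operatorname{Sym}^p\cS'_L)$ whose $i$-th graded piece ($0\le i\le p-2$) is $\operatorname{Sym}^i(L/n)\otimes H^1(\mathcal O_{\PP^1}(-(p-i)))$. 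Writing $-(p-i)=-(p-i-2)-2$ and substituting the stated identification $H^1(\mathcal O_{\PP^1}(-m-2))\simeq \det V^\vee\otimes \operatorname{Sym}^m V^\vee$ with $V=(L\backslash n)/(L/n)\oplus L|n$, then using the canonical isomorphism $V^\vee\simeq V\otimes\det V^\vee$ valid for the $2$‑dimensional $V$, recasts this graded piece in the form claimed in the proposition.

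For $\cQ'_L$ it is essential to use the sequence of \Cref{lem:P1} in which the line bundle is the \emph{sub}bundle, namely $0\to \mathcal L_\cQ\to \cQ'_L\to \cQ'_{L\backslash n\oplus L|n}\to 0$ with $\mathcal L_\cQ\simeq\mathcal O_{\PP^1}(1)$ and $\cQ'_{L\backslash n\oplus L|n}$ trivial. Applying $\operatorname{Sym}^p$ gives a filtration of $\operatorname{Sym}^p\cQ'_L$ with graded pieces $\mathcal O_{\PP^1}(k)\otimes\operatorname{Sym}^{p-k}\big(\kk^{E\setminus n}/(L\backslash n)\big)$, all of which have vanishing $H^1$; hence $H^1(\operatorname{Sym}^p\cQ'_L)=0$, and $H^0(\operatorname{Sym}^p\cQ'_L)$ inherits a filtration with graded pieces $\operatorname{Sym}^k V\otimes\operatorname{Sym}^{p-k}\big(\kk^{E\setminus n}/(L\backslash n)\big)$, using $H^0(\mathcal L_\cQ^{\otimes k})\simeq\operatorname{Sym}^k V$. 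To identify the total space, I compare with the symmetric‑power filtration of $\operatorname{Sym}^p\big(\kk^{E\setminus n}/(L/n)\oplus\kk^{\{n\}}\big)$ arising from the short exact sequence $0\to V\to \kk^{E\setminus n}/(L/n)\oplus\kk^{\{n\}}\to \kk^{E\setminus n}/(L\backslash n)\to 0$ (a direct summand of the middle column of the second diagram in the proof of \Cref{lem:P1}): it has the same graded pieces, and — since taking $H^0$ commutes with symmetric powers on $\mathcal O_{\PP^1}(1)$ and on trivial bundles — the natural isomorphism of the $p=1$ case $H^0(\cQ'_L)\simeq \kk^{E\setminus n}/(L/n)\oplus\kk$ propagates compatibly through both filtrations, giving the stated natural isomorphism.

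I expect the main obstacle to be conceptual rather than computational: recognizing that for $\cQ'_L$ one must present $\cQ'_L$ as an extension by the \emph{sub}bundle $\mathcal L_\cQ\simeq\mathcal O_{\PP^1}(1)$ — the alternative presentation $0\to \mathcal L_\cS\to(\text{trivial})\to \cQ'_L\to 0$ produces filtration pieces $\mathcal L_\cS^{\otimes k}\otimes\operatorname{Sym}^{p-k}\cQ'_L$ that still involve $\cQ'_L$ itself and so do not close the computation. The secondary nuisance is the multilinear bookkeeping that recasts the $H^1(\operatorname{Sym}^p\cS'_L)$ graded pieces into the exact tensor expression stated, which is routine once one is careful with the $\det$‑twists on the $2$‑dimensional space $V$.
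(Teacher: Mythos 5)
Your proposal is correct and follows essentially the same route as the paper: both cases are handled by applying $\operatorname{Sym}^p$ to the two short exact sequences of \Cref{lem:P1}, using the resulting filtrations with line-bundle-twisted trivial graded pieces, reading off $H^0$ and $H^1$ on $\PP^1$, and identifying the $H^0(\operatorname{Sym}^p\cQ'_L)$ filtration with the one on $\operatorname{Sym}^p\big(\kk^{E\setminus n}/(L/n)\oplus\kk\big)$ induced by the subspace $V=(L\backslash n)/(L/n)\oplus L|n$. The bookkeeping with $\det V^\vee$ and $V^\vee\simeq V\otimes\det V^\vee$ for the $H^1$ graded pieces also matches the paper's computation.
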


\begin{proof}
When $n$ is a loop or a coloop, the bundles $\cS'_L$ and $\cQ'_L$ are trivial, and the claimed statements follow easily.  Suppose $n$ is neither a loop or coloop now.  For the statements about $\operatorname{Sym}^p \cS'_L$, we first note that the short exact sequence $0\to \cS'_{L/n \oplus 0} \to \cS'_L \to \mathcal L_\cS \to 0$ in \Cref{lem:P1}, along with some multilinear algebra (e.g.\ \cite[Exercise II.5.16]{Har77}), gives a filtration $\operatorname{Sym}^p\cS'_L  = \mathcal F_0 \supseteq \mathcal F_1 \supseteq \cdots \supseteq \mathcal F_p \supseteq \mathcal F_{p+1} = 0$ with $\mathcal F_i / \mathcal F_{i+1} \simeq \operatorname{Sym}^i \cS'_{L/n \oplus 0} \otimes \mathcal L_\cS^{\otimes p-i}$.  In the long exact sequences
\[
0 \to H^0(\mathcal F_{i+1}) \to H^0(\mathcal F_i) \to H^0(\mathcal F_i / \mathcal F_{i+1}) \to H^1(\mathcal F_{i+1}) \to H^1(\mathcal F_i) \to H^1(\mathcal F_i / \mathcal F_{i+1}) \to 0,
\]
we have $H^0(\mathcal F_i / \mathcal F_{i+1}) = 0$ if $i < p$ since $\mathcal L_S \simeq \mathcal O_{\PP^1}(-1)$, and thus we have $H^0(\operatorname{Sym}^p \cS'_L) \simeq H^0(\mathcal F_p / \mathcal F_{p+1}) = \operatorname{Sym}^p (L/n)$.  The filtration for $H^1$ also follows since the $H^1$'s form a short exact sequence for each $i$, and
\begin{align*}
&H^1(\mathcal F_i / \mathcal F_{i+1}) \\
&\simeq \det((L\backslash n)/(L/n) \oplus L|n)^\vee \otimes \operatorname{Sym}^{p-2-i}((L\backslash n)/(L/n) \oplus L|n)^\vee \otimes \operatorname{Sym}^i(L/n \oplus 0)\\
&\simeq \big(\det((L\backslash n)/(L/n) \oplus L|n)^\vee\big)^{\otimes p-1-i} \otimes \operatorname{Sym}^{p-2-i}((L\backslash n)/(L/n) \oplus L|n) \otimes \operatorname{Sym}^i(L/n \oplus 0).
\end{align*}
For the statements about $\operatorname{Sym}^p\cQ'_L$, we similarly have from $0\to \mathcal L_\cQ \to \cQ'_L \to \cQ'_{L\backslash n\oplus L|n} \to 0$ a filtration
$\operatorname{Sym}^p\cQ'_L  = \mathcal F_0 \supseteq \mathcal F_1 \supseteq \cdots \supseteq \mathcal F_p \supseteq \mathcal F_{p+1} = 0$ with $\mathcal F_i / \mathcal F_{i+1} \simeq \mathcal L_\cQ^{\otimes i}\otimes  \operatorname{Sym}^{p-i} \cQ'_{L/n \oplus L|n}$.  Note that $\mathcal L_\cQ \simeq \mathcal O_{\PP^1}(1)$, so all $H^1(\mathcal F_i / \mathcal F_{i+1})$ vanish, and hence $H^1(\mathcal F_i) = 0$ for all $i$ as well.  Now, the resulting short exact sequences
$
0 \to H^0(\mathcal F_{i+1}) \to H^0(\mathcal F_i) \to H^0(\mathcal F_i / \mathcal F_{i+1}) \to 0
$
give a filtration of $H^0(\operatorname{Sym}^p\cQ'_L)$ whose successive quotients are
\[\operatorname{Sym}^i((L\backslash n)/(L/n) \oplus L|n) \otimes \operatorname{Sym}^{p-i}(\kk^{E\setminus n}/(L\backslash n) \oplus \kk/(L|n)).
\]
This is exactly the filtration of $\operatorname{Sym}^p(\kk^{E\setminus n}/(L/n) \oplus \kk)$ arising from the short exact sequence $0\to (L\backslash n)/(L/n) \oplus L|n \to \kk^{E\setminus n}/(L/n) \oplus \kk \to \kk^{E\setminus n}/(L\backslash n) \oplus \kk/(L|n) \to 0$.
\end{proof}


We are now ready to prove the main theorems.

\begin{proof}[Proof of Theorems~\ref{thm:extdelcont} and \ref{thm:symdelcont}]
We first prove the statement for $\bwedge^p\cS_L$.  The statements for $\bwedge^p\cQ_L$, $\operatorname{Sym}^p\cQ_L$, and $\operatorname{Sym}^p \cS_L$ are proved similarly, for which we will explain the required modifications at the end.
We will compute the cohomologies of the restriction of $\bwedge^p\cS_L$ to a fiber of $f: X_E \to X_{E\setminus n}$, and then apply Grauert's theorem.

A point $y\in X_{E\setminus n}$ is of the form $t\cdot p_{\mathscr F}$ for some $t\in (\kk^*)^{E\setminus n}$ and an ordered set partition $\mathscr F = (F_1, \ldots, F_\ell)$ of $E\setminus n$.
To reduce notational burden (such as $(t\cdot L)_{\mathscr F(i)}$), we assume without loss of generality that $t$ is the identity.
By Corollary~\ref{prop:constant}, the restriction $\bwedge^p \cS_L|_{f^{-1}(y)}$ is constant on all $\PP^1$-components of $f^{-1}(y)$ except possibly on $C(t,k)$ for some $1\leq k \leq \ell$ (if no such then fix an arbitrary $1\leq k \leq \ell$).  By Lemma~\ref{lem:Lbndry}, the restriction $\bwedge^p \cS_L|_{C(t,k)}$ to $C(t,k) \simeq \PP^1$ is isomorphic to $\bwedge^p\cS'_{L_{\mathscr F(k)}}$, where $\cS'_{L_{\mathscr F(k)}}$ is as defined in Definition~\ref{defn:tautP1}.
Hence we have $H^i(\bwedge^p \cS_L|_{f^{-1}(y)}) \simeq H^i(\bwedge^p\cS'_{L_{\mathscr F(k)}})$ for all $i$.
Now, Proposition~\ref{prop:wedgeH0} implies that $H^i(\bwedge^p\cS'_{L_{\mathscr F(k)}}) = 0$ for $i\geq 1$, and moreover, when we note that $L_{\mathscr F(j)}/n = (L/n)_{\mathscr F}$ for any $1\leq j \leq \ell$, the proposition gives
\[
H^0(\bwedge^p \cS'_{L_{\mathscr F(k)}}) \simeq \begin{cases}
\bwedge^p ((L/n)_{\mathscr F} \oplus \kk) & \text{if $n$ a coloop},\\
\bwedge^p((L/n)_{\mathscr F}) & \text{if $n$ not a coloop}.
\end{cases}
\]
In particular, as $\dim((L/n)_{\mathscr F}) = \dim (L/n)$, the dimension of $H^0(\bwedge^p(\cS_L|_{f^{-1}(y)})$ is invariant as the point $y$ varies.
The desired statements for $\bwedge^p \cS_L$ now follow from Grauert's theorem.

For the statements about $\bwedge^p\cQ_L$, $\operatorname{Sym}^p\cQ_L$, and $\operatorname{Sym}^p \cS_L$, one modifies the proof for $\bwedge^p\cS_L$ as follows.
For $\bwedge^p\cQ_L$, the proof is the same.
For $\operatorname{Sym}^p\cQ_L$, the proof is the same except that in the last step, Proposition~\ref{prop:symH} takes the place of Proposition~\ref{prop:wedgeH0}.
For $\operatorname{Sym}^p\cS_L$, again Proposition~\ref{prop:symH} is used in the last step, but it does not imply that $H^i(\operatorname{Sym}^p\cS'_{L_{\mathscr F(k)}}) = 0$ for $i\geq 1$, although it still implies the invariance of the dimension of $H^0(\operatorname{Sym}^p\cS_L|_{f^{-1}(y)})$.  Note the absence of statements about higher direct images of $\operatorname{Sym}^p\cS_L$ in Theorem~\ref{thm:symdelcont}.
\end{proof}

\begin{rem}
Let $\mathcal E$ be any globally generated vector bundle on $X_E$, such as $\bwedge^2\cQ_L \otimes \operatorname{Sym}^3\cQ_L$.
Its restriction to a fiber of $f$ has no higher cohomology since it is a globally generated bundle on a chain of $\PP^1$, and hence $f_*\mathcal E$ is a vector bundle with $R^i f_*\mathcal E  = 0$ for all $i>0$.  However, without sufficiently explicit description of $f_*\mathcal E$ like the one in Theorem~\ref{thm:extdelcont}, one cannot conclude much about $H^i(\mathcal E)$.  We currently do not have an analogue of Proposition~\ref{prop:wedgeH0} for arbitrary Schur/Weyl functors applied to $\cQ'_L$, and thus our treatment is restricted to exterior and symmetric powers.
\end{rem}

\section{Wonderful compactifications}\label{sec:wndcpt}

We begin with a review of wonderful compactifications introduced in \cite{dCP95}.
To avoid trivialities, throughout this section we assume $L\subseteq \kk^E$ to be loopless, so that the intersection $\PP L \cap \PP T$ is nonempty.

\smallskip
Let $\mathcal A$ be the arrangement of hyperplanes $\{\PP L \cap H_e : e\in E\}$ where $H_e$ is the $e$-th coordinate hyperplane of $\PP(\kk^E)$.  Notice that $\PP L \cap \PP T = \PP L \setminus (\bigcup \mathcal A)$.
Let $\mathcal P$ be the poset whose elements are the linear subvarieties $P\subseteq \PP L$ that arise as intersections of hyperplanes in $\mathcal A$, with partial ordering $G\leq G'$ given by reverse inclusion $P \supseteq P'$.  The poset $\mathcal P$ has the top and bottom elements $\hat 1 = \emptyset$ and $\hat 0 = \PP L$, respectively.
In matroid theory, this poset is known as the \emph{lattice of flats} of the matroid $\M$ of $L$.

\begin{defn}\label{defn:wndcpt}
A \emph{building set} $\mathcal G$ is a subset of $\mathcal P\setminus\{\hat 0, \hat 1\}$ such that for every $P \in \mathcal P\setminus \{\hat 0 , \hat 1\}$, the set $\max \mathcal G_{\leq P}$ of maximal elements of $\mathcal G$ in the interval $[\hat 0, P]$ satisfies
\[
[\hat 0, P] \simeq \prod_{G\in \max\mathcal G_{\leq P}} [\hat 0, G].
\]
The \emph{wonderful compactification} of $L$ with building set $\mathcal G$ is the variety $W_L^{\mathcal G}$ obtained from $\PP L$ by sequentially blowing-up the linear subvarieties of $\PP L$ in $\mathcal G$, starting with the smallest dimensional ones to the largest.
\end{defn}

The \emph{boundary} $\partial W_L^{\mathcal G} = W_L^{\mathcal G} \setminus (\PP L \cap \PP T)$ of $W_L^{\mathcal G}$ is a simple normal crossings divisor \cite[Section 3.1]{dCP95}.
A \emph{stratum} in the boundary is the intersection of a subset of the irreducible components of the boundary divisor $\partial W_L^{\mathcal G}$, which is necessarily smooth.
Note that $\mathcal P\setminus \{\hat 0, \hat 1\}$ itself is a building set, in which case we abuse notation to denote $W_L^{\mathcal P}=W_L^{\mathcal P \setminus \{\hat 0, \hat 1\}}$.
We now recall as a lemma two facts from the literature to prepare for the proof of Corollary~\ref{cor:wndcpt}, which stated that the log canonical divisor $K_{W_L^{\mathcal G}} + \partial W_L^{\mathcal G}$ has vanishing higher cohomology.

\pagebreak

\begin{lem}\label{lem:wndcpt}
Let notations be as above.
\begin{enumerate}
\item If $\mathcal G$ and $\mathcal H$ are building sets on $\mathcal P$ such that $\mathcal G \supseteq \mathcal H$, then there exists a sequence of building sets $(\mathcal G = \mathcal G_1, \mathcal G_2, \cdots, \mathcal G_\ell = \mathcal H)$ such that $W_L^{\mathcal G_i}$ is the blow-up of a stratum in the boundary of $W_L^{\mathcal G_{i+1}}$ for each $i = 1, \ldots, \ell-1$.
\item The variety $W_L^{\mathcal P}$ is isomorphic to the vanishing locus in $X_E$ of a global section of $\cQ_L$.  Under this isomorphism, we have $\mathcal O_{W_L^{\mathcal P}} (K_{W_L^{\mathcal P}} + \partial W_L^{\mathcal P}) \simeq \det \cQ_L|_{W_L^{\mathcal P}}$.
\end{enumerate}
\end{lem}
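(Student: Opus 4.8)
The plan is to treat the two assertions separately. For (1) I would induct on $|\mathcal G\setminus\mathcal H|$, the case $\mathcal G=\mathcal H$ being vacuous; when $\mathcal G\supsetneq\mathcal H$, pick $G\in\mathcal G\setminus\mathcal H$ that is minimal in the poset $\mathcal P$. Then every element of $\mathcal G$ strictly below $G$ lies in $\mathcal H$, so since $\mathcal H$ is a building set we have $[\hat 0,G]\simeq\prod_{G''\in\,\max(\mathcal G\setminus\{G\})_{<G}}[\hat 0,G'']$, and from this one checks directly against the axiom in Definition~\ref{defn:wndcpt} that $\mathcal G\setminus\{G\}$ is again a building set. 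The structural input I would import from \cite{dCP95,FY04} is that, for two building sets differing in a single decomposable element $G$, the morphism $W_L^{\mathcal G}\to W_L^{\mathcal G\setminus\{G\}}$ is the blow-up of the dominant transform of $G$, which --- $G$ being decomposable with respect to $\mathcal G\setminus\{G\}$ --- is the intersection of the exceptional divisors indexed by $\max(\mathcal G\setminus\{G\})_{<G}$, hence a stratum in the boundary of $W_L^{\mathcal G\setminus\{G\}}$. Prepending this blow-up to the chain obtained by applying the induction hypothesis to $\mathcal G\setminus\{G\}\supseteq\mathcal H$ then closes the induction.

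For (2), I would first recall that $W_L^{\mathcal P}$ is the closure of $\PP L\cap\PP T$ inside $X_E$: the Bergman fan of $\M$, with its fine fan structure, is a subfan of $\Sigma_E$ with support $\operatorname{trop}(\PP L\cap\PP T)$, so the closure of $\PP L\cap\PP T$ in $X_E$ is the wonderful model for the full building set (see \cite[Section~5]{BEST23}). Next comes the section: pushing the constant section $\be_E\in\kk^E$ of $\mathcal O_{X_E}^{\oplus E}$ through the surjection $\mathcal O_{X_E}^{\oplus E}\twoheadrightarrow\cQ_L$ gives a global section $s$ of $\cQ_L$ whose value over $\overline t\in\PP T$ is the class of $\be_E$ in $\kk^E/t^{-1}L$, which vanishes exactly when $t\in L$, i.e.\ when $\overline t\in\PP L\cap\PP T$; hence $Z(s)\cap\PP T=\PP L\cap\PP T$ and so $Z(s)\supseteq W_L^{\mathcal P}$. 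Since $\operatorname{rank}\cQ_L=|E|-r=\operatorname{codim}_{X_E}W_L^{\mathcal P}$, every component of $Z(s)$ has codimension at most $\operatorname{rank}\cQ_L$, so $W_L^{\mathcal P}$ is an irreducible component of $Z(s)$ and every other component is contained in the toric boundary; restricting $s$ to each orbit closure $Z_{\mathscr F}$ and invoking the description of $\cQ_L|_{Z_{\mathscr F}}$ in Lemma~\ref{lem:Lbndry}, one rules out spurious boundary components and checks that $s$ is a regular section, so that $Z(s)=W_L^{\mathcal P}$ scheme-theoretically. I would cite \cite[Section~5]{BEST23} for this last identification.

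Granting that $W:=W_L^{\mathcal P}$ is the zero scheme of the regular section $s$ of $\cQ_L$, its normal bundle in $X_E$ is $N_{W/X_E}\simeq\cQ_L|_W$, so adjunction gives $\mathcal O_W(K_W)\simeq\mathcal O_{X_E}(K_{X_E})|_W\otimes\det\cQ_L|_W$. Because $X_E$ is a smooth complete toric variety, $-K_{X_E}$ is the reduced toric boundary $\partial X_E$, and because $W$ is a tropical compactification it meets every torus orbit of $X_E$ transversally, so $\partial X_E$ restricts to the reduced divisor $W\cap\partial X_E=\partial W$; thus $\mathcal O_{X_E}(K_{X_E})|_W\simeq\mathcal O_W(-\partial W)$. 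Substituting into the adjunction formula gives $\det\cQ_L|_W\simeq\mathcal O_W(K_W+\partial W)$, which is the second assertion of (2).

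The step I expect to be the main obstacle is the scheme-theoretic identification $Z(s)=W_L^{\mathcal P}$: showing that $s$ is a regular section and that $Z(s)$ is reduced with no extra components supported on the toric boundary of $X_E$. Once this is secured, the adjunction computation and the transversality identification $\partial X_E|_W=\partial W$ are formal, and (1) amounts to lightly unpacking the standard combinatorics of removable elements of building sets.
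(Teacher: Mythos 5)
Your proposal is correct, and for most of the lemma it travels the same road as the paper: part (1) is in both cases the combinatorics of \cite{FM05} (the paper simply cites the proof of Theorem 4.2 there and translates via the nested-complex dictionary of \cite{dCP95}, while you unpack the induction of removing a $\mathcal P$-minimal element of $\mathcal G\setminus\mathcal H$, which is exactly what that argument does), and the identification of $W_L^{\mathcal P}$ with the zero scheme of the section $s$ of $\cQ_L$ induced by $\be_E$ is in both cases outsourced to \cite{BEST23} (Theorem 7.10 there); you rightly flag the scheme-theoretic equality $Z(s)=W_L^{\mathcal P}$ as the genuinely hard point, and neither you nor the paper reproves it. Where you genuinely diverge is the log-canonical formula. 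The paper deduces $\mathcal O_{W_L^{\mathcal P}}(K_{W_L^{\mathcal P}}+\partial W_L^{\mathcal P})\simeq\det\cQ_L|_{W_L^{\mathcal P}}$ from the identification $\det\mathcal T_{W_L^{\mathcal P}}(-\log\partial W_L^{\mathcal P})\simeq\det\cS_L|_{W_L^{\mathcal P}}$ of \cite[Theorem 8.8]{BEST23} together with $\det\cS_L^\vee\simeq\det\cQ_L$ from the defining exact sequence. You instead run adjunction for the regular section ($\det N_{W/X_E}\simeq\det\cQ_L|_W$), use $-K_{X_E}=\partial X_E$ for the smooth toric variety $X_E$, and cancel via $\partial X_E|_W=\partial W$. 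This is a valid and arguably more self-contained derivation once $W=Z(s)$ is granted, but note that the cancellation step is not purely formal: it needs that $W$ meets each toric boundary divisor $D_S$ transversally (so the restriction is reduced), that $D_S\cap W=\emptyset$ exactly when $S$ is not a flat, and that the remaining intersections $D_F\cap W$ are precisely the irreducible components of $\partial W$ in de Concini--Procesi's sense; this is the sch\"on/tropical-compactification input (Feichtner--Sturmfels, Tevelev, or \cite[Section 7]{BEST23}), so your route trades the citation of Theorem 8.8 for a citation of comparable weight. Net effect: same theorem, two different pieces of \cite{BEST23}/the tropical literature doing the heavy lifting.
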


\begin{proof} (1) is a translation of \cite[Proof of Theorem 4.2]{FM05} into geometric language under the dictionary provided in \cite[Section 2]{dCP95} between the boundary strata structure of $W_L^{\mathcal G}$ and the simplicial complex known as the \emph{nested complex} of $\mathcal G$.

The first statement of (2) is \cite[Theorem 7.10]{BEST23}.
The second statement follows from \cite[Theorem 8.8]{BEST23}, which implies that $\det \mathcal T_{W_L^{\mathcal P}}(-\log \partial W_L^{\mathcal P}) \simeq \det \cS_L|_{W_L^{\mathcal P}}$, and $\det \cS_L^\vee \simeq \det \cQ_L$ from $0\to \cS_L \to \mathcal O_{X_E}^{\oplus E} \to \cQ_L \to 0$.
\end{proof}

\begin{proof}[Proof of Corollary~\ref{cor:wndcpt}]
We first claim that $H^i(\mathcal O_{W_L^{\mathcal G}}(K_{W_L^{\mathcal G}} + \partial W_L^{\mathcal G})) \simeq H^i(\mathcal O_{W_L^{\mathcal P}}(K_{W_L^{\mathcal P}} + \partial W_L^{\mathcal P}))$ for any building set $\mathcal G$.
Let $\pi: W_L^{\mathcal P} \to W_L^{\mathcal G}$ be the composition of blow-down maps given by Lemma~\ref{lem:wndcpt}(1).
Applying Lemma~\ref{lem:sncblowup} below to each of the blow-down maps making up $\pi$, we find $\pi^*(K_{W_L^{\mathcal G}} + \partial W_L^{\mathcal G}) = 
K_{W_L^{\mathcal P}} + \partial W_L^{\mathcal P}$.  
Lastly, since $\pi$ is a proper birational map of smooth varieties, by \cite[Theorem 2]{CR11} we have $\pi_* \mathcal O_{W_L^{\mathcal P}} = \mathcal O_{W_L^{\mathcal G}}$ and $R^i\pi_* \mathcal O_{W_L^{\mathcal P}} = 0$ for all $i>0$.\footnote{Because $\pi$ is a composition of blow-ups of along smooth loci, one may deduce this without \cite[Theorem 2]{CR11} by suitably modifying the proof of \cite[Proposition V.3.4]{Har77}, which is stated only for surfaces.}
Our claim now follows from the projection formula.

To finish, the first statement of Lemma~\ref{lem:wndcpt}(2) implies that we have the Koszul resolution
\[
0\to \det \cQ_L^\vee \to \cdots \to \bwedge^2 \cQ_L^\vee
\to \cQ_L^\vee \to \mathcal O_{X_E} \to \mathcal O_{W_L^{\mathcal P}} \to 0.
\]
Since $\det \mathcal E \otimes \bwedge^i \mathcal E^\vee \simeq \bwedge^{\operatorname{rank}(\mathcal E) - i} \mathcal E$ for a vector bundle $\mathcal E$, twisting the above resolution by $\det \cQ_L$ and noting the second statement of Lemma~\ref{lem:wndcpt}(2) gives the resolution
\[
0\to \mathcal O_{X_E}\to \cQ_L \to \bwedge^2 \cQ_L \to \cdots
\to  \det\cQ_L\to \mathcal O_{W_L^{\mathcal P}} (K_{W_L^{\mathcal P}} + \partial W_L^{\mathcal P})  \to 0.
\]
Applying Theorem~\ref{thm:extcohom} now yields the desired corollary by standard homological algebra \cite[Proposition B.1.2]{Laz04a}.
\end{proof}

\begin{lem}\label{lem:sncblowup}
Let $X$ be a smooth variety, and $D$ a simple normal crossings divisor with irreducible components $D_1, \dotsc, D_m$.
Let $Y$ be a subvariety of codimension $c$ that is a stratum in $D$, say $Y = D_1\cap \dotsb \cap D_c$.  Under the blow-up $\varphi: \widetilde X = \operatorname{Bl}_YX \to X$, let $E$ be the exceptional divisor, and $\widetilde D_i$ the strict transform of $D_i$.  Then, we have $\varphi^*(K_X + D) = K_{\widetilde X} + E + \widetilde D_1 + \dotsb + \widetilde D_m$.
\end{lem}

\begin{proof}
Recall that, for any blow-up $\varphi: \widetilde X \to X$ of a smooth $c$-codimensional subvariety $Y$ in a smooth variety $X$, we have $K_{\widetilde X} = \varphi^* K_X + (c-1) E$ \cite[Exercise II.8.5]{Har77}.
As $Y$ is contained in the (smooth) components $D_1, \dotsc, D_c$ as a smooth subvariety, and $Y$ intersects every other component of $D$ transversely (if nonempty), we find that $\varphi^*(D_i) = \widetilde D_i + E$ if $1\leq i\leq c$ and $\varphi^*(D_i) = \widetilde D_i$ if $i > c$.
Thus, we find $\varphi^*(K_X+D) = K_{\widetilde X} -(c-1)E + (\widetilde D_1 +E) + \dotsb + (\widetilde D_c +E) + \widetilde D_{c+1} + \dotsb + \widetilde D_m = K_{\widetilde X} + E + \widetilde D_1 + \dotsb + \widetilde D_m$.
\end{proof}

\begin{rem}
When the arrangement $\mathcal A$ from $L$ is the braid arrangement, consisting of hyperplanes $\{x \in \kk^{n-2}/\kk(1,\dotsc, 1) : x_i = x_j\}$ for $1\leq i < j \leq n-2$, the wonderful compactification with respect to the minimal building set is the moduli space $\overline{M}_{0,n}$ of stable rational curves with $n$ marked points.
Using the moduli structure of $\overline{M}_{0,n}$, \cite{Pan97} and \cite{Lee97} showed that any nonnegative sum of $\psi$-classes have vanishing higher cohomologies.
We do not know whether such techniques carry over to arbitrary wonderful compactifications to yield a strengthening of Corollary~\ref{cor:wndcpt}.
\end{rem}

\section{Questions}\label{sec:misc}

A broader theme behind Question~\ref{ques:troplin} is to ask: Which sheaf theoretic properties of realizations of matroids extend to all matroids?
We collect some related observations and questions.
We will now assume familiarity with matroid theory.
As in the previous section, we suppose $L\subseteq \kk^E$ to be loopless to avoid trivialities.

\smallskip
For an arbitrary not necessarily realizable matroid $\M$, there are $K$-classes $[\cS_\M]$ and $[\cQ_\M]$ in the Grothendieck $K$-ring of vector bundles on $X_E$ such that $[\cS_\M] = [\cS_L]$ and $[\cQ_\M] = [\cQ_L]$ whenever $\M$ has a realization $L\subseteq \kk^E$ \cite[Section 3.1]{BEST23}.
Let us denote by $D_{-P(\M)} = c_1(\cQ_\M)$ the first Chern class of $[\cQ_\M]$.
If $\M$ has a realization $L$, Lemma~\ref{lem:wndcpt}(2) states that the log canonical divisor of $W_L^{\mathcal P}$ is $D_{-P(\M)}|_{W_L^{\mathcal P}}$.
Even if $\M$ is not realizable, we may consider the line bundle $\mathcal O_{X_E}(D_{-P(\M)})$.

\begin{rem}\label{rem:basepolytope}
Let us sketch an explanation of the notation $D_{-P(\M)}$.
The \emph{matroid base polytope} of a matroid $\M$ on a ground set $E$ is the polytope $P(\M)  = \text{the convex hull of }\{\be_B \mid B \text{ a basis of }\M\}$ in $\RR^E$.
Let $-P(\M)$ be the polytope $\{x \in \RR^E : -x \in P(\M)\}$.
Under a known correspondence between nef divisors on a toric variety and certain polytopes \cite[Chapter 6]{CLS11}, the polytope $-P(\M)$ defines a nef divisor $D_{-P(\M)}$ on the permutohedral variety $X_E$, which can be shown to equal $c_1(\mathcal Q_\M)$.
See \cite[Section 2.7 and Appendix III]{BEST23} for details, including conventions that explain the minus $-P(\M)$ (instead of $P(\M)$).
\end{rem}

\subsection{Immaculate line bundles}
We have the following variation of Corollary~\ref{cor:wndcpt}.

\begin{cor}
Let $L'\subseteq L$ be a subspace such that $\dim L  = \dim L' - 1$, and let $\M'$ be the matroid of $L'$.  Then, the line bundle $\mathcal O_{W_L^{\mathcal P}}(D_{-P(\M)} - D_{-P(\M')})$ on $W_L^{\mathcal P}$ satisfies
\begin{align*}
H^i\big(\mathcal O_{W_L^{\mathcal P}}(D_{-P(\M)} - D_{-P(\M')})\big) = 0 \quad\text{for all $i>0$,}\quad\text{and}\\
\dim H^0\big(\mathcal O_{W_L^{\mathcal P}}(D_{-P(\M)} - D_{-P(\M')})\big) = \begin{cases} 1 & \text{if $\M'$ has loops}\\ 0 & \text{if $\M'$ is loopless}.\end{cases}
\end{align*}
In particular, the line bundle $\mathcal O_{W_L^{\mathcal P}}(D_{-P(\M)} - D_{-P(\M')})$ on $W_L^{\mathcal P}$ is \emph{immaculate}, i.e.\ has no nonzero cohomologies, if $\M'$ is loopless.
\end{cor}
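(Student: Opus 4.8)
The plan is to reduce the statement to Theorem~\ref{thm:extcohom} via a Koszul-type resolution, exactly as in the proof of Corollary~\ref{cor:wndcpt}, but now using the inclusion $L \subseteq L'$ to relate the two tautological bundles. Since $\dim L' = \dim L + 1$, the quotient $L'/L$ is one-dimensional, and dualizing the inclusion $L \hookrightarrow L'$ of subbundles of $\mathcal O_{X_E}^{\oplus E}$ gives a surjection of quotient bundles $\cQ_L \twoheadrightarrow \cQ_{L'}$ whose kernel is a line bundle $\mathcal L$ on $X_E$ with $c_1(\mathcal L) = c_1(\cQ_L) - c_1(\cQ_{L'}) = D_{-P(\M)} - D_{-P(\M')}$. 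The first step is to identify this line bundle $\mathcal L$ precisely — it is the image of the trivial sub-line-bundle $\mathcal O_{X_E} \otimes (\text{a complement of } L \text{ in } L')$ under $\mathcal O_{X_E}^{\oplus E} \twoheadrightarrow \cQ_L$, though of course this "complement" is only defined fiberwise up to the image, so one must argue it globally using the short exact sequence $0 \to \mathcal L \to \cQ_L \to \cQ_{L'} \to 0$.

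The second step is to build the Koszul-type resolution. Taking exterior powers of $0 \to \mathcal L \to \cQ_L \to \cQ_{L'} \to 0$ yields, for each $p$, a short exact sequence $0 \to \mathcal L \otimes \bwedge^{p-1}\cQ_{L'} \to \bwedge^p \cQ_L \to \bwedge^p \cQ_{L'} \to 0$. Splicing these together (or equivalently, since $\mathcal L$ is a line bundle, using the two-term complex $\mathcal L \to \cQ_L$ and its exterior powers) produces a long exact sequence resolving $\mathcal L$ by the bundles $\bwedge^p\cQ_L \otimes (\bwedge^\bullet\cQ_{L'})^\vee \otimes \det$-type terms; more cleanly, one gets the resolution
\[
0 \to \mathcal O_{X_E} \to \cQ_L \otimes \cQ_{L'}^\vee \otimes \mathcal L \to \cdots
\]
Actually the cleanest route is: from $0 \to \mathcal L \to \cQ_L \to \cQ_{L'} \to 0$ one obtains that $\mathcal L \simeq \det\cQ_L \otimes (\det \cQ_{L'})^\vee$, and then $\bwedge^j \cQ_{L'}^\vee \otimes \det\cQ_{L'} \simeq \bwedge^{r'-j}\cQ_{L'}$ where $r' = \operatorname{rank}\cQ_{L'}$. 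Chasing the exterior-power sequences, one builds a resolution of $\mathcal L$ (or of $\mathcal O_{W_L^{\mathcal P}}(D_{-P(\M)} - D_{-P(\M')})$ after restricting to $W_L^{\mathcal P}$ and twisting, paralleling the Koszul argument in Corollary~\ref{cor:wndcpt}) all of whose terms are exterior powers of $\cQ_L$ and $\cQ_{L'}$ up to twisting by $\mathcal O_{X_E}$. Since these have vanishing higher cohomology by Theorem~\ref{thm:extcohom}, the homological algebra of \cite[Proposition B.1.2]{Laz04a} forces $H^i(\mathcal L) = 0$ for $i > 0$, and computes $H^0$ as an alternating sum of the $H^0(\bwedge^p\cQ_\bullet)$, which by the formula in Theorem~\ref{thm:extcohom} collapses to the stated value (it detects precisely whether $\M'$ has a loop, since the generating function for $\cQ_{L'}$ differs from that for $\cQ_L$ exactly in the loop case by a factor of $(u+1)$).

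The third step is bookkeeping to pass from $X_E$ to $W_L^{\mathcal P}$: exactly as in the proof of Corollary~\ref{cor:wndcpt}, one restricts the resolution along the Koszul resolution of $\mathcal O_{W_L^{\mathcal P}}$ inside $X_E$ coming from Lemma~\ref{lem:wndcpt}(2), or simply notes that $W_L^{\mathcal P}$ is cut out by a section of $\cQ_L$ and that all the ambient bundles appearing still have vanishing higher cohomology after the requisite twist. The final arithmetic — evaluating $\dim H^0$ — is a matter of comparing the two generating functions $\sum_p \dim H^0(\bwedge^p\cQ_L)u^p$ and $\sum_p \dim H^0(\bwedge^p\cQ_{L'})u^p$ from Theorem~\ref{thm:extcohom}: when $\M'$ is loopless the two polynomials agree after the relevant substitution and the alternating sum vanishes, while if $\M'$ has a loop an extra factor of $(1+u)$ appears and the alternating sum evaluates to $1$.

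**Expected main obstacle.** The delicate point is Step 1 — pinning down the line bundle $\mathcal L = \ker(\cQ_L \to \cQ_{L'})$ and verifying that its first Chern class is $D_{-P(\M)} - D_{-P(\M')}$ compatibly with the identification in Lemma~\ref{lem:wndcpt}(2), rather than merely up to sign or twist. One must check that the inclusion $L \subseteq L'$ really does induce a morphism of the tautological sequences over all of $X_E$ (not just over the open torus $\PP T$), which follows from the uniqueness clause in Lemma~\ref{lem:Lbndry} applied on each orbit closure, but should be stated carefully. Once the resolution is in hand, the cohomology vanishing is immediate from Theorem~\ref{thm:extcohom} and the $H^0$ count is a short generating-function computation, so I do not anticipate difficulty there.
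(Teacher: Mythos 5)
Your overall strategy---compare the two tautological quotient bundles via a short exact sequence with a line-bundle kernel, take exterior powers, and feed the result into the Koszul resolution of $\mathcal O_{W_L^{\mathcal P}}$---is indeed the paper's strategy. But there is a genuine gap in the middle, and you have misplaced the difficulty. The step you flag as delicate (identifying the line bundle and its Chern class) is a one-line determinant computation. The real issue is that you never actually produce a resolution whose terms have controlled cohomology. The terms of the Koszul resolution of $\mathcal O_{W_L^{\mathcal P}}$ twisted by your $\mathcal L$ are $\bwedge^p\cQ_L^\vee\otimes\mathcal L$, and neither Theorem~\ref{thm:extcohom} nor the sequences you write control these. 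The sequences you do write, $0\to\mathcal L\otimes\bwedge^{p-1}\cQ_{L'}\to\bwedge^p\cQ_L\to\bwedge^p\cQ_{L'}\to 0$, place the twisted bundle as a \emph{sub}object: the long exact sequence gives $H^i(\mathcal L\otimes\bwedge^{p-1}\cQ_{L'})=0$ only for $i\geq 2$, while $H^1$ is the cokernel of $H^0(\bwedge^p\cQ_L)\to H^0(\bwedge^p\cQ_{L'})$, whose surjectivity you neither state nor prove. The paper avoids this by dualizing: the exterior powers of $0\to\cQ^\vee\to\cQ'^\vee\to\mathcal L^\vee\to 0$ exhibit $\bwedge^{p}\cQ^\vee\otimes\mathcal L^\vee$ as a \emph{quotient} of $\bwedge^{p+1}\cQ'^\vee$, and these are exactly the terms of the dual-twisted Koszul resolution. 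The vanishing for $\bwedge^\bullet\cQ_L^\vee$ comes from Remark~\ref{rem:crem} ($\cQ_L^\vee\simeq\operatorname{crem}\cS_{L^\perp}$), which your proposal never invokes but which is essential, since the Koszul resolution involves duals; the same remark gives $\dim H^0(\bwedge^p\cQ_L^\vee)=\binom{|\mathrm{loops}(\M)|}{p}$, from which the alternating sum in the $H^0$ count is immediate. Your generating-function heuristic for $H^0$ is not a proof as stated.

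A separate point on orientation: you took the hypothesis at face value ($L\subseteq L'$), but then every loop of $\M'$ is a loop of $\M$, so with $L$ loopless (assumed throughout the section) the case ``$\M'$ has loops'' is vacuous. The paper's proof, and the elementary-quotient discussion that follows it, make clear that the intended setup is $L'\subseteq L$ with $\dim L'=\dim L-1$, giving a surjection $\cQ_{L'}\twoheadrightarrow\cQ_L$ with line-bundle kernel $\mathcal L_{\M',\M}\simeq\mathcal O_{X_E}(D_{-P(\M')}-D_{-P(\M)})$; the bundle in the statement is its dual. This is not merely cosmetic: in your orientation the controlled quotients are twists of $\bwedge^p\cQ_{L'}^\vee$ by the \emph{inverse} of the line bundle in question, which do not match the terms needed to resolve $\mathcal O_{W_L^{\mathcal P}}(D_{-P(\M)}-D_{-P(\M')})$, so the argument cannot be completed as set up.
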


\begin{proof}
By construction, from $L'\subset L \subseteq \kk^E$ we have the surjective map $\cQ_{L'} \to \cQ_{L}$.
Let $\mathcal L_{\M',\M}$ be the kernel, so that we have the short exact sequence
\[
0 \to \mathcal L_{\M',\M} \to \cQ_{L'} \to \cQ_L \to 0.
\]
By taking $\det$ of the sequence, we see that $\mathcal L_{\M',\M} \simeq \mathcal O_{X_E}(D_{-P(\M')} - D_{-P(\M)})$.  Applying duality and exterior power, we obtain for each $p\geq 1$ a short exact sequence
\[
0 \to \bwedge^p \cQ_L^{\vee} \to \bwedge^p \cQ_{L'}^{\vee} \to \bwedge^{p-1} \cQ_L^{\vee} \otimes \mathcal L^\vee_{\M',\M} \to 0.
\]
Applying Theorem~\ref{thm:extcohom} and Remark~\ref{rem:crem} to the long exact sequence of cohomologies, we thus obtain $H^i(\bwedge^{p-1} \cQ_L^{\vee} \otimes \mathcal L^\vee_{\M',\M}) = 0$ for all $i>0$ and $p \geq 1$.  Moreover, we have
\[
\dim H^0(\bwedge^{p-1} \cQ_L^{\vee} \otimes \mathcal L^\vee_{\M',\M}) = \dim H^0(\bwedge^p \cQ_{L'}^\vee) -  \dim H^0(\bwedge^p \cQ_{L}^\vee) =  \textstyle\binom{|\text{loops}(\M')|}{p} -   \binom{|\text{loops}(\M)|}{p},
\]
where for the last equality we used that $\operatorname{crem}\cQ_L^\vee \simeq \cS_{L^\perp}$, and $L^\perp$ realizes the dual matroid $\M^\perp$ whose coloops correspond to the loops of the original matroid $\M$.  To finish, recall the Koszul resolution $\bwedge^\bullet \cQ_L^\vee \to \mathcal O_{W_L^{\mathcal P}} \to 0$ from the proof of Corollary~\ref{cor:wndcpt}.  Twisting the resolution by $\mathcal L^\vee_{\M',\M}$ and taking cohomology, keeping in mind standard homological algebra \cite[Proposition B.1.2]{Laz04a}, one obtains the desired result.
\end{proof}

An \emph{elementary matroid quotient} $\M \twoheadrightarrow \M'$ consists of two matroids $\M$ and $\M'$ whose ranks differ by 1 such that every flat of $\M'$ is a flat of $\M$.  It is \emph{realizable} if there is a flag of linear subspaces $L' \subseteq L \subseteq \kk^E$ such that $L'$ and $L$ respectively realize $\M'$ and $\M$.
In light of the corollary above, we ask the following question.

\begin{ques}
For any elementary matroid quotient $\M\twoheadrightarrow \M'$, not necessarily realizable, is the line bundle $\mathcal O_{W_L^{\mathcal P}}(D_{-P(\M)} - D_{-P(\M')})$ on $W_L^{\mathcal P}$ immaculate, i.e.\ has no nonzero cohomologies, if $\M'$ is loopless?

Is there a theory of tropical line bundles and their sheaf cohomology on tropical manifolds such that it agrees with the above corollary?
\end{ques}

\subsection{Log canonical image}
We conclude with a discussion of the log canonical image of a wonderful compactification of $L$.
The line bundle $\det \cQ_L \simeq \mathcal O_{X_E}(D_{-P(\M)})$ is globally generated, with torus-invariant sections in bijection with the bases of $\M$ (see \cite[Proposition 4.3.3]{CLS11} and \cite[Section 2.7 and Example 3.11]{BEST23}).
We may thus consider the embedded projective variety
\begin{align*}
X_L & = \text{the closure of the image of $\PP L \cap \PP T$ under the map $\varphi: X_E \to \PP(H^0(\det \cQ_L))$}\\
& = \text{the closure of $\PP L \cap \PP T$ in the toric variety of the matroid polytope $-P(\M)$ (Remark~\ref{rem:basepolytope}).}
\end{align*}
This variety $X_L$ is also known as Kapranov's \emph{visible contour} \cite{Kap93}.  When $\M$ is connected, as we shall assume from now, the variety $X_L$ is the log canonical model of $\PP L \cap \PP T$ with (\'etale locally) toric singularities \cite[Section 2]{HKT06}.  For a building set $\mathcal G$, the map $W_L^{\mathcal G} \to X_L$ given by the log canonical bundle of $W_L^{\mathcal G}$ is a (\'etale locally) toric resolution of singularities, 
and thus $H^i(\mathcal O_{X_L}(\ell)) \simeq H^i(\det \cQ_L|_{W_L^{\mathcal P}}^{\otimes \ell})$ for all $\ell \in \ZZ$.
In particular, applying Theorem~\ref{thm:extcohom} and Remark~\ref{rem:crem} to the Koszul complexes in the proof of Corollary~\ref{cor:wndcpt} yields the following.

\begin{cor}\label{cor:idealsheaf}
The ideal sheaf $\mathcal I_{X_L}$ satisfies
\[
H^i(\mathcal I_{X_L}) = 0 \quad\text{and}\quad H^i(\mathcal I_{X_L}(1)) = 0 \quad\text{for all $i>0$},
\]
and hence $H^i(\mathcal O_{X_L}) =0$ and $H^i(\mathcal O_{X_L}(1)) = 0$ for all $i>0$.
\end{cor}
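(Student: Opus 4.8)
The plan is to deduce Corollary~\ref{cor:idealsheaf} directly from the Koszul complex for $W_L^{\mathcal P}$ inside $X_E$ together with the cohomology vanishing of exterior powers of $\cQ_L$ and $\cQ_L^\vee$. First I would recall, as in the proof of Corollary~\ref{cor:wndcpt}, that Lemma~\ref{lem:wndcpt}(2) gives $W_L^{\mathcal P}$ as the zero locus of a regular section of $\cQ_L$, hence the Koszul resolution
\[
0 \to \det \cQ_L^\vee \to \cdots \to \bwedge^2 \cQ_L^\vee \to \cQ_L^\vee \to \mathcal O_{X_E} \to \mathcal O_{W_L^{\mathcal P}} \to 0.
\]
The kernel of $\mathcal O_{X_E} \twoheadrightarrow \mathcal O_{W_L^{\mathcal P}}$ is the ideal sheaf of $W_L^{\mathcal P}$ in $X_E$; but since the log canonical map $W_L^{\mathcal P} \to X_L$ and the map $X_E \to \PP(H^0(\det\cQ_L))$ that cuts out $X_L$ are compatible (the latter restricts to the former on $W_L^{\mathcal P}$, and by the toric resolution statement preceding the corollary we have $H^i(\mathcal O_{X_L}(\ell)) \simeq H^i(\det\cQ_L|_{W_L^{\mathcal P}}^{\otimes \ell})$), computing $H^i(\mathcal I_{X_L}(\ell))$ reduces to computing $H^i$ of the corresponding sheaves on $X_E$ twisted appropriately.

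The key steps, in order, are: (i) twist the Koszul resolution above by $\det\cQ_L^{\otimes \ell}$ for $\ell = 0$ and $\ell = 1$ and use $\det\cQ_L \otimes \bwedge^i\cQ_L^\vee \simeq \bwedge^{\operatorname{rank}\cQ_L - i}\cQ_L$ to rewrite each term as an exterior power of $\cQ_L$ (for $\ell = 1$ one gets terms $\bwedge^j \cQ_L \otimes \det\cQ_L$ or, after the identification, exterior powers of $\cQ_L$ tensored with at most one copy of $\det\cQ_L$); (ii) invoke Theorem~\ref{thm:extcohom}, which gives $H^i(\bwedge^p\cQ_L) = 0$ for all $i>0$, and Remark~\ref{rem:crem}, which via $\cQ_L^\vee \simeq \operatorname{crem}\cS_{L^\perp}$ gives the analogous vanishing for exterior powers of $\cQ_L^\vee$; (iii) chase the resulting two-term-at-a-time long exact sequences — equivalently, apply the standard hypercohomology spectral sequence of a resolution \cite[Proposition B.1.2]{Laz04a} — to conclude $H^i(\mathcal I_{X_L}) = H^i(\mathcal I_{X_L}(1)) = 0$ for $i>0$; (iv) finally, feed these into the short exact sequence $0 \to \mathcal I_{X_L}(\ell) \to \mathcal O_{\PP}(\ell) \to \mathcal O_{X_L}(\ell) \to 0$ and use that $H^i(\mathcal O_{\PP}(\ell)) = 0$ for $0<i$ and $\ell \in \{0,1\}$ on projective space to get $H^i(\mathcal O_{X_L}) = H^i(\mathcal O_{X_L}(1)) = 0$.

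The main obstacle I anticipate is bookkeeping rather than anything conceptual: one must be careful that twisting the Koszul complex by $\det\cQ_L$ (for the $\ell = 1$ case) produces terms whose cohomology is genuinely covered by the combination of Theorem~\ref{thm:extcohom} and Remark~\ref{rem:crem} — in particular one should check that a term of the shape $\bwedge^p\cQ_L \otimes \det\cQ_L$ or $\bwedge^p \cQ_L^\vee \otimes \det\cQ_L$ can be re-expressed, via $\det\mathcal E \otimes \bwedge^i\mathcal E^\vee \simeq \bwedge^{\operatorname{rank}\mathcal E - i}\mathcal E$, as an honest exterior power of $\cQ_L$ (hence covered by Theorem~\ref{thm:extcohom}) and not something requiring new input. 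The other point needing a word of justification is the identification $H^i(\mathcal I_{X_L}(\ell)) \simeq H^i$ of the Koszul-complex kernel on $X_E$ twisted by $\det\cQ_L^{\otimes\ell}$, which follows because $\det\cQ_L$ is globally generated and the map $\varphi$ it defines restricts on $W_L^{\mathcal P}$ to the log canonical model map, combined with the pushforward-vanishing statement $H^i(\mathcal O_{X_L}(\ell))\simeq H^i(\det\cQ_L|_{W_L^{\mathcal P}}^{\otimes\ell})$ recorded just before the corollary. Once these identifications are in place, steps (i)–(iv) are routine.
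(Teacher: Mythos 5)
Your plan captures exactly the ingredients the paper uses: the two Koszul resolutions from the proof of Corollary~\ref{cor:wndcpt}, the identification $H^i(\mathcal O_{X_L}(\ell)) \simeq H^i(\det\cQ_L|_{W_L^{\mathcal P}}^{\otimes\ell})$ recorded just before the corollary, Theorem~\ref{thm:extcohom} for the $\bwedge^p\cQ_L$ and Remark~\ref{rem:crem} for the $\bwedge^p\cQ_L^\vee$, and finally the short exact sequence $0 \to \mathcal I_{X_L}(\ell)\to\mathcal O_\PP(\ell)\to\mathcal O_{X_L}(\ell)\to 0$. So the route is the paper's.

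One point of logical hygiene, though. You can't literally ``conclude $H^i(\mathcal I_{X_L}(\ell))=0$'' from the twisted Koszul complex on $X_E$, because $\mathcal I_{X_L}$ lives on $\PP = \PP(H^0(\det\cQ_L))$, not on $X_E$ or $W_L^{\mathcal P}$. The first paragraph of your proposal (about the kernel of $\mathcal O_{X_E}\twoheadrightarrow\mathcal O_{W_L^{\mathcal P}}$ being a proxy for $\mathcal I_{X_L}$) is where this confusion creeps in; the ideal of $W_L^{\mathcal P}$ in $X_E$ is a different sheaf from $\mathcal I_{X_L}$, and $\varphi\colon X_E\to\PP$ is not a closed embedding. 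The correct chain is: the Koszul complex (twisted by $\det\cQ_L^{\otimes\ell}$ for $\ell\in\{0,1\}$, all of whose terms have vanishing higher cohomology by Theorem~\ref{thm:extcohom} for $\ell=1$ and Remark~\ref{rem:crem} for $\ell=0$) gives $H^i(\det\cQ_L|_{W_L^{\mathcal P}}^{\otimes\ell})=0$ for $i>0$, and also the surjectivity of $H^0(X_E,\det\cQ_L^{\otimes\ell})\to H^0(W_L^{\mathcal P},\det\cQ_L|_{W_L^{\mathcal P}}^{\otimes\ell})$; the resolution-of-singularities statement translates the former into $H^i(\mathcal O_{X_L}(\ell))=0$ for $i>0$ and the latter into the surjectivity of $H^0(\mathcal O_\PP(\ell))\to H^0(\mathcal O_{X_L}(\ell))$; the long exact sequence of the ideal-sheaf sequence then gives $H^1(\mathcal I_{X_L}(\ell))=0$ (from the surjectivity on $H^0$) and $H^i(\mathcal I_{X_L}(\ell))\simeq H^{i-1}(\mathcal O_{X_L}(\ell))=0$ for $i\geq 2$. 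In other words, your steps (iii) and (iv) should be swapped: $H^i(\mathcal O_{X_L}(\ell))$ comes first and $H^i(\mathcal I_{X_L}(\ell))$ follows from it, not the other way around. The corollary's ``hence'' only records that once both sides of the sequence are zero in positive degrees, each implies the other; it is not a directive for the order of proof. With that reordering your argument is complete.
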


Over characteristic zero, applying \cite[Theorem C]{BF22} further implies that $H^i(\mathcal I_{X_L}(\ell)) = 0$ and $H^i(\mathcal O_{X_L}(\ell)) = 0$ for all $i>0$ and $\ell \geq 0$.
Moreover, over characteristic zero, by Kawamata--Viehweg vanishing we have $H^i(\mathcal O_{X_L}(-\ell)) = 0$ for all $i< \dim X_L$ and $\ell >0$.
We thus ask the following, part of which is a strengthening of Speyer's question in Remark~\ref{rem:speyer}.

\begin{ques}
Suppose $\kk$ has positive characteristic.  Is $H^i(\mathcal I_{X_L}(\ell)) = 0$ and $H^i(\mathcal O_{X_L}(\ell))=0$ for all $i>0$ and $\ell \geq 0$?  Is $H^i(\mathcal O_{X_L}(-\ell)) = 0$ for all $i<\dim X_L$ and $\ell >0$?  In particular, is the embedded variety $X_L$ projectively normal and/or arithmetically Cohen-Macaulay?\footnote{Matt Larson also proposed this question during the Banff workshop ``Algebraic Aspects of Matroid Theory.''}
\end{ques}


\begin{rem}
Given a fixed total order of $E$, one can show that the restrictions to $W_L^{\mathcal P}$ of the torus-invariant sections of $\det \cQ_L$ are spanned by those that correspond to the \emph{nbc-bases} of the matroid $\M$.
One can moreover show that they not only span but also form a basis of $H^0(\mathcal O_{X_L}(1))$, by using Corollary~\ref{cor:idealsheaf} and by noting that the quantity
\[
\dim H^0(\mathcal O_{X_L}(1)) = \dim H^0 (\mathcal O_{W_L^{\mathcal P}} (K_{W_L^{\mathcal P}} + \partial W_L^{\mathcal P}) ) = \sum_{\substack{S\subseteq E \\ \text{$S$ contains }\\ \text{a basis of $\M$}}}(-1)^{|S|-r}
\]
is the \emph{M\"obius invariant} $\mathrm{T}_\M(1,0)$ of $\M$, which equals the number of nbc-bases of $\M$ \cite{Bry77}.
\end{rem}

\small
\bibliography{Eur_CohomTautMat.bib}
\bibliographystyle{alpha}

\end{document}